 \keywords{ primes, twin primes, gaps, prime constellations, Eratosthenes sieve,
primorial numbers, Polignac's conjecture}
\subjclass{11N05, 11A41, 11A07}
\newtheorem{theorem}{Theorem}[section]
\newtheorem{lemma}[theorem]{Lemma}
\newtheorem{corollary}[theorem]{Corollary}
\newtheorem{remark}[theorem]{Remark}
\newdimen\epsfxsize
\newdimen\epsfysize
\newcommand {\gap}     {\makebox[0.075 in]{}}   
\newcommand {\biggap}     {\makebox[0.15 in]{}}   
\newcommand {\st}      {\gap : \gap}
\newcommand {\fto}     {\longrightarrow}
\newcommand {\set}[1]  {\left\{ {#1} \right\}}   
\newcommand {\ord}[1]  {{#1}^{\rm th}}
\newcommand{\pml}[1] {{#1}^\#}
\newcommand{\Z}     {{\mathbb Z}}
\newcommand{\Zmod}  {\Z \bmod \pml{p_k}}
\newcommand{\Zmodp}  {\Z \bmod \pml{p_{k+1}}}
\newcommand{\N}[2]  {N_{{#2}}({#1})}
\newcommand{\Rat}[2]  {w_{{#2},{#1}}}   
\newcommand{\pgap}   {{\mathcal G}}
\newcommand{\MJP}[2] {\left. M_{{#1}} \right|_{{#2}}}
\newcommand{\AJP}[2] {\left. \Lambda_{{#1}} \right|_{{#2}}}
\newcommand{\wgp}[2] {\left. \bar{w}_{{#1}} \right|_{{#2}}}
\newcommand{\Bi}[2]{\left( \begin{array}{c}{#1} \\ {#2} \end{array}\right)}
\newcommand{\lil}{\scriptstyle}
\begin{document}

\title{Eratosthenes~sieve and~the~gaps~between~primes}

\date{25 Aug 2014}

\author{Fred B. Holt and Helgi Rudd}
\address{fbholt@uw.edu ;  4311-11th Ave NE \#500, Seattle, WA 98105;
48B York Place, Prahran, Australia 3181}

\begin{abstract}
A few years ago we identified a recursion that works directly with the gaps among
the generators in each stage of Eratosthenes sieve.  This recursion provides explicit
enumerations of sequences of gaps among the generators, which sequences are known as 
constellations.

By studying this recursion on the cycles of gaps 
across stages of Eratosthenes sieve, we are able to provide evidence on a number of
open problems regarding gaps between prime numbers.  The basic counts of short 
constellations in the cycles of gaps provide evidence toward the twin prime conjecture
and toward resolving a series of 
questions posed by Erd{\" o}s and Tur{\'a}n.  The dynamic system underlying
the recursion provides evidence toward Polignac's conjecture and in support of the
estimates made for gaps among primes by Hardy and Littlewood
in Conjecture B of their 1923 paper.

\end{abstract}

\maketitle

\section{Introduction}
 
We work with the prime numbers in ascending order, denoting the
$\ord{k}$ prime by $p_k$.  Accompanying the sequence of primes
is the sequence of gaps between consecutive primes.
We denote the gap between $p_k$ and $p_{k+1}$ by
$g_k=p_{k+1}-p_k.$
These sequences begin
$$
\begin{array}{rrrrrrc}
p_1=2, & p_2=3, & p_3=5, & p_4=7, & p_5=11, & p_6=13, & \ldots\\
g_1=1, & g_2=2, & g_3=2, & g_4=4, & g_5=2, & g_6=4, & \ldots
\end{array}
$$

A number $d$ is the {\em difference} between prime numbers if there are
two prime numbers, $p$ and $q$, such that $q-p=d$.  There are already
many interesting results and open questions about differences between
prime numbers; a seminal and inspirational work about differences
between primes is Hardy and Littlewood's 1923 paper \cite{HL}.

A number $g$ is a {\em gap} between prime numbers if it is the difference
between consecutive primes; that is, $p=p_i$ and $q=p_{i+1}$ and
$q-p=g$.
Differences of length $2$ or $4$ are also gaps; so open questions
like the Twin Prime Conjecture, that there are an infinite number
of gaps $g_k=2$, can be formulated as questions about differences
as well.

A {\em constellation among primes} \cite{Riesel} is a sequence of consecutive gaps
between prime numbers.  Let $s=a_1 a_2 \cdots a_k$ be a sequence of $k$
numbers.  Then $s$ is a constellation among primes if there exists a sequence of
$k+1$ consecutive prime numbers $p_i p_{i+1} \cdots p_{i+k}$ such
that for each $j=1,\ldots,k$, we have the gap $p_{i+j}-p_{i+j-1}=a_j$.  
Equivalently,
$s$ is a constellation if for some $i$ and all $j=1,\ldots,k$,
$a_j=g_{i+j}$.

We do not study the gaps between primes directly.  Instead, we study the cycle of gaps
$\pgap(\pml{p})$ at each stage of Eratosthenes sieve.  Here, $\pml{p}$ is the
{\em primorial} of $p$, which is the product of all primes from $2$ up to and including $p$.
$\pgap(\pml{p})$ is the cycle of gaps among the generators of $\Z \bmod \pml{p}$.
These generators and their images through the counting numbers are the candidate primes
after Eratosthenes sieve has run through the stages from $2$ to $p$.  All of the remaining primes
are among these candidates.

There is a substantial amount of structure preserved in the cycle of gaps from
one stage of Eratosthenes sieve to the next, from $\pgap(\pml{p_k})$ to 
$\pgap(\pml{p_{k+1}})$.  This structure is sufficient to enable us to give exact counts for gaps and for 
sufficiently short constellations in $\pgap(\pml{p})$ across all stages of the sieve.

\subsection{Some conjectures and open problems regarding gaps between primes.}
Open problems regarding gaps and constellations between prime numbers include
the following.
\begin{itemize}
\item {\em Twin Prime Conjecture} - There are infinitely many pairs of consecutive primes with
gap $g=2$.
\item {\em Polignac's Conjecture} - For every even number $2n$, there are infinitely many pairs of 
consecutive primes with gap $g=2n$.
\item {\em Primorial conjecture} - The gap $g=6=\pml{3}$ occurs more often than the gap
$g=2$, and eventually the gap $g=30=\pml{5}$ occurs more often than the gap $g=6$.
\item {\em HL Conjecture B} - From page 42 of Hardy and Littlewood \cite{HL}:  for any even $k$, the number of prime pairs $q$ and $q+k$ such that $q+k < n$ is approximately
$$ 2 C_2 \frac{n}{(\log n)^2} \prod_{p \neq 2, \; p | k} \frac{p-1}{p-2}.$$
\item {\em ET Spikes} - From p.377 of Erd\"os and Tur\'an \cite{ET}, that it is very probable that
$$ \lim \sup \frac{g_{k+1}}{g_k} = \infty \gap {\rm and} \gap \lim \inf \frac{g_{k+1}}{g_k} = 0.$$
\item {\em ET Superlinearity} - On p.378 of Erd\"os and Tur\'an \cite{ET}, the open question is posed
whether for every $k > 1$ there are infinitely many $n$ such that
$$ g_n < g_{n+1} < g_{n+2} < \cdots < g_{n+k} .$$
\end{itemize}

These problems and others regarding the gaps and differences among primes are
usually approached through sophisticated probabilistic models, rooted in the prime number theorem.
Seminal works for our studies include \cite{HL, HW, Cramer}.  
Several estimates on gaps derived from these models
have been corroborated computationally.  These computations have
addressed the occurrence of twin primes \cite{Brent3, NicelyTwins, PSZ, IJ, JLB},
and some have corroborated the estimates in Conjecture B for other gaps
\cite{Brent, GranRaces}.

Work on specific constellations among primes includes the study of prime quadruplets
\cite{HL, quads},
which corresponds to the constellation $2,4,2$.  This is two pairs of twin primes separated
by a gap of $4$, the densest possible occurrence of primes in the large. 
The estimates for prime quadruplets have also been supported computationally
\cite{NicelyQuads}.

\subsection{Analogues demonstrated for Eratosthenes sieve.}
We do not resolve any of the open problems as stated above for gaps between primes.
However, we are able to resolve their analogues for gaps in the stages of Eratosthenes sieve.
Through our work below we prove the following.
\begin{itemize}
\item {\em Twin Generators} - The number of gaps $g=2$ in the cycle of gaps
$\pgap(\pml{p_k})$ is
$$N_2(\pml{p_k}) = \prod_{q=3}^{p_k} (q-2).$$
\item {\em Polignac's Conjecture and HL Conjecture B} - For every even number $2n$, 
the gap $g=2n$ arises
at some stage in the cycle of gaps $\pgap(\pml{p})$ and thereafter the ratio of its
occurrences in a cycle of gaps to the number of gaps $g=2$ approaches the asymptotic
value suggested by Hardy and Littlewood's Conjecture B
$$ \lim_{p \fto \infty} \frac{N_{2n}(\pml{p})}{N_2(\pml{p})} = \prod_{q>2, \; q | n} \frac{q-1}{q-2}.$$
\item {\em Primorial conjecture} - The dynamic system that yields the preceding result 
tells us that for primorial gaps $g= \pml{p_{k-1}}$ and $g=\pml{p_k}$,
$$ \lim_{q \fto \infty} \frac{N_{\pml{p_k}}(\pml{q})}{N_{\pml{p_{k-1}}}(\pml{q})} = 
 \frac{p_k-1}{p_k-2}.$$
The eigenvalues of the dynamic system indicate how quickly the values will
converge to the asymptotic ratio.
\item {\em ET Spikes} - For gaps in the cycles of gaps,
$$ \lim \sup \frac{g_{k+1}}{g_k} = \infty \gap {\rm and} \gap \lim \inf \frac{g_{k+1}}{g_k} = 0.$$
In particular, for $g_k=2$, the adjacent gaps $g_{k-1}$ and $g_{k+1}$ become arbitrarily large in 
later stages of the sieve.
\item {\em ET Superlinear growth} - For every $k > 1$ there is a cycle of gaps $\pgap(\pml{p})$
with a constellation of $k$ consecutive gaps such that
$$  g_{n+1} < g_{n+2} < \cdots < g_{n+k}.$$
This constellation persists across all subsequent stages of the sieve, and its population increases
by the factor $p-k-1$ at each stage.
\item {\em ET Superlinear decay} - For every $k > 1$ there is a cycle of gaps $\pgap(\pml{p})$
with a constellation of $k$ consecutive gaps such that
$$ g_{n+1} >  g_{n+2} > \cdots  > g_{n+k}.$$
This constellation persists across all subsequent stages of the sieve, and its population increases
by the factor $p-k-1$ at each stage.
\end{itemize}

These results are deterministic, not probabilistic.  We develop a population model below
that describes the growth of the populations of various gaps in the cycle of gaps, across
the stages of Eratosthenes sieve.

All gaps between prime numbers arise in a cycle of gaps.  To connect our results 
to the desired results on gaps between primes, we need to better understand how
gaps survive later stages of the sieve, to be affirmed as gaps between
primes.

\section{The cycle of gaps}
After the first two stages of Eratosthenes sieve, we have removed the multiples of $2$ and $3$.
The candidate primes at this stage of the sieve are 
$$(1),5,7,11,13,17,19,23,25,29,31,35,37,41,43,\ldots$$
We investigate the structure of these sequences of candidate primes
by studying the cycle of gaps in the fundamental cycle.

For example, for the candidate primes listed above, the first gap from $1$ to $5$ is $g=4$,
the second gap from $5$ to $7$ is $g=2$, then $g=4$ from $7$ to $11$, and so on.
The {\em cycle of gaps} $\pgap(\pml{3})$ is $42$.  To reduce visual clutter, we write
the cycles of gaps as a concatenation of single digit gaps, reserving the use of commas to 
delineate gaps of two or more digits.

$$\pgap(\pml{3}) = 42, \gap {\rm with} \gap g_{3,1}=4 \gap {\rm and} \gap 
g_{3,2}=2.$$

Advancing Eratosthenes sieve one more stage, we identify $5$ as the next prime 
and remove the multiples of $5$ from the list of candidates, leaving us with
$$(1),7,11,13,17,19,23,29,31,37,41,43,47,49,53,59,61, \ldots$$
We calculate the cycle of gaps at this stage to be
$\pgap(\pml{5}) = 64242462.$

We note that $\pgap(\pml{p})$ consists of $\phi(\pml{p})$ gaps that sum to $\pml{p}$.

\subsection{Recursion on the cycle of gaps}
There is a nice recursion which produces $\pgap(p_{k+1})$ directly from
$\pgap(p_k)$.  We concatenate $p_{k+1}$ copies of $\pgap(p_k)$, and
add together certain gaps as indicated by the entry-wise product
$p_{k+1}*\pgap(p_k)$.  

\begin{lemma} \label{RecursLemma}
The cycle of gaps $\pgap(\pml{p_{k+1}})$ is derived recursively from 
$\pgap(\pml{p_k})$.
Each stage in the recursion consists of the following three steps:
\begin{itemize}
\item[R1.] Determine the next prime, $p_{k+1} = g_{k,1} + 1$.
\item[R2.] Concatenate $p_{k+1}$ copies of $\pgap(\pml{p_k})$.
\item[R3.] Add adjacent gaps as indicated by the elementwise product 
$p_{k+1}*\pgap(\pml{p_k})$:  let $i_1=1$ and add together $g_{i_1}+g_{i_1+1}$; then for 
$n=1,\ldots,\phi(N)$, add $g_{j}+g_{j+1}$ and let 
$i_{n+1}=j$ if the running sum of the concatenated gaps from $g_{i_n}$ to $g_j$ is
$p_{k+1}*g_{n}.$
\end{itemize}
\end{lemma}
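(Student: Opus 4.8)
The plan is to track exactly which candidate primes (generators of $\Z \bmod \pml{p_{k+1}}$) survive the $(k+1)$st stage of the sieve, and to see that the recursion R1--R3 reproduces precisely the gap structure of the survivors. First I would recall that the generators of $\Z \bmod \pml{p_k}$, taken over the interval $[1, \pml{p_k}]$, are exactly the integers coprime to $\pml{p_k}$; listing them in ascending order and taking successive differences gives $\pgap(\pml{p_k})$, a cyclic sequence of $\phi(\pml{p_k})$ gaps summing to $\pml{p_k}$. Since the first survivor is $1$, the second is the next prime, which is $g_{k,1}+1$; this is step R1 and also pins down where the cycle ``starts.''

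Next I would establish step R2 by the Chinese Remainder Theorem. The integers coprime to $\pml{p_k}$ in the interval $[1, \pml{p_{k+1}}] = [1, p_{k+1}\pml{p_k}]$ are obtained by taking the $p_{k+1}$ translates of the base pattern: for each $m = 0, 1, \ldots, p_{k+1}-1$, the numbers $n + m\pml{p_k}$ where $n$ runs over the generators of $\Z \bmod \pml{p_k}$. Because $\pml{p_k}$ is itself coprime to $p_{k+1}$, the gap sequence of this larger set of integers (those coprime to $\pml{p_k}$, but not yet sieved by $p_{k+1}$) is literally $p_{k+1}$ concatenated copies of $\pgap(\pml{p_k})$: within each block the gaps are unchanged, and the gap bridging block $m$ to block $m+1$ is the gap wrapping around the cycle $\pgap(\pml{p_k})$, namely $g_{k,\phi(\pml{p_k})} + \ldots$ handled by the cyclic convention. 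So after R2 we have the cycle of gaps for the generators of $\Z \bmod \pml{p_k}$ over a full period of length $\pml{p_{k+1}}$.

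Then comes step R3, the removal of the multiples of $p_{k+1}$. Among the integers coprime to $\pml{p_k}$ in $[1,\pml{p_{k+1}}]$, exactly those in residue classes divisible by $p_{k+1}$ must be deleted; by CRT there are exactly $\phi(\pml{p_k})$ of them, one in each residue class mod $p_{k+1}$ among the $\phi(\pml{p_k})$ generators, and they are equally ``spread'' through the concatenated cycle. Deleting a generator merges the two gaps on either side of it into a single gap. The content of R3 is the bookkeeping that identifies which position to delete at each step: starting from $i_1 = 1$ (the generator $1$ is always in the class $\equiv 1$, and the first multiple of $p_{k+1}$ among the coprime residues is reached after advancing a running sum of $p_{k+1} \cdot g_{k,1}$ — or more precisely, the $n$th deletion occurs where the cumulative sum of concatenated gaps equals $p_{k+1} g_{k,n}$, which is exactly the elementwise product $p_{k+1} * \pgap(\pml{p_k})$). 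I would verify this indexing by induction on $n$: if the $n$th multiple of $p_{k+1}$ sits at the generator whose distance from the start is $p_{k+1} g_{k,n}$, then the next one sits a further $p_{k+1} g_{k,n+1}$ along, because within each $\pml{p_k}$-block the multiples of $p_{k+1}$ among the coprime residues are separated by exactly the gap pattern $\pgap(\pml{p_k})$ scaled by $p_{k+1}$ (this is again CRT, applied to $\Z \bmod p_{k+1}\pml{p_k}$). The merge of $g_j + g_{j+1}$ at each such position is exactly R3.

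The main obstacle is making the indexing claim in R3 fully rigorous: one must show that the positions of the deleted generators, measured as partial sums of the concatenated gap sequence, land precisely on the partial sums of $p_{k+1} * \pgap(\pml{p_k})$, with no off-by-one drift accumulating over the $\phi(\pml{p_k})$ deletions and no interaction with the block boundaries. I would handle this by setting up a clean bijection: the $n$th generator of $\Z \bmod \pml{p_k}$ (counting from $1$) sits at position $P_n := \sum_{i<n} g_{k,i} + 1$; among its $p_{k+1}$ translates $P_n + m\pml{p_k}$, exactly one is $\equiv 0 \pmod{p_{k+1}}$, and solving the congruence shows that translate is the one that, in the concatenated cycle, falls at cumulative gap-sum $p_{k+1} P_n - $ (a fixed shift absorbed into the choice $i_1 = 1$). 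Once that correspondence is nailed down, R3 follows and the three steps together reconstruct $\pgap(\pml{p_{k+1}})$ exactly.
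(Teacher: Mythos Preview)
Your proposal is correct and follows essentially the same route as the paper: identify generators with partial sums of gaps, note that R2 lists all residues coprime to $\pml{p_k}$ over a full period $\pml{p_{k+1}}$, and then observe that the elements removed in R3 are exactly the numbers $p_{k+1}\cdot\gamma_{k,j}$, whose successive differences are $p_{k+1}\cdot g_{k,j}$. The paper reaches R3 slightly more directly by naming the deleted elements as $p_{k+1}$ times the generators (so the spacing $p_{k+1}*\pgap(\pml{p_k})$ is immediate), whereas you route through CRT on the translates $P_n + m\,\pml{p_k}$; this makes your final paragraph about off-by-one drift and block boundaries more laborious than necessary, but the argument is the same in substance.
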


\begin{proof}  
Let $\pgap(\pml{p_k})$ be the cycle of gaps for the stage of Eratosthenes sieve
after the multiples of the primes up through $p_k$ have been removed.  We show that the recursion
R1-R2-R3 on $\pgap(\pml{p_k})$ produces the cycle of gaps for the next stage, corresponding
to the removal of multiples of $p_{k+1}$.

There is a natural one-to-one correspondence between the gaps in the cycle
of gaps $\pgap(\pml{p_k})$ and the generators of $\Z \bmod \pml{p_k}$.
For $j=1,\ldots, \phi(\pml{p_k})$ let
\begin{equation}\label{EqGen}
\gamma_{k,j} = 1 + \sum_{i=1}^j g_i.
\end{equation}
These $\gamma_{k,j}$ are the generators in $\Z \bmod \pml{p_k}$, with 
$\gamma_{k,\phi(\pml{p_k})} \equiv 1 \bmod \pml{p_k}$.
 
The  $\ord{j}$ candidate prime at this stage of the sieve is given by $\gamma_{k,j}$.

The next prime $p_{k+1}$ will be $\gamma_{k,1}$, since this will be the smallest
integer both greater than $1$ and coprime to $\pml{p_k}$.

The second step of the recursion extends our list of possible primes
up to $\pml{p_{k+1}}+1$, the reach of the fundamental cycle for $\pml{p_{k+1}}$.
For the gaps $g_j$ we extend the indexing on $j$ to cover these 
concatenated copies.  These $p_{k+1}$ concatenated copies of $\pgap(\pml{p_k})$
correspond to all the numbers from $1$ to $\pml{p_{k+1}}+1$ which are coprime
to $\pml{p_k}$.  For the set of generators of $\pml{p_{k+1}}$, we need only remove
the multiples of $p_{k+1}$. 

The third step removes the multiples of $p_{k+1}$.
Removing a possible prime amounts to
adding together the gaps on either side of this entry.
The only multiples of $p_{k+1}$ which remain in the copies of $\pgap(\pml{p_k})$
are those multiples all of whose prime factors are greater than $p_k$.
After $p_{k+1}$ itself, the next multiple to be removed will be $p_{k+1}^2$.

The multiples we seek to remove are given by $p_{k+1}$ times the generators
of $\Z \bmod \pml{p_k}$.  The consecutive differences between these will be given
by $p_{k+1} * g_j$, and the sequence $p_{k+1}*\pgap(\pml{p_k})$ suffices to cover
the concatenated copies of $\pgap(\pml{p_k})$.  We need not consider any fewer nor any
more multiples of $p_{k+1}$ to obtain the generators for $\pgap(\pml{p_{k+1}})$.

In the statement of R3, the index $n$ moves through the copy of
$\pgap(\pml{p_k})$ being multiplied by $p_{k+1}$, and the indices $\tilde{i}_n$
mark the index $j$ at which the addition of gaps is to occur.
The multiples of $p_{k+1}$ in the sieve up through $p_k$ are given by
$p_{k+1}$ itself and $p_{k+1}*\gamma_{k,j}$ for $j=1,\ldots,\phi(\pml{p_k})$.
The difference between successive multiples is $p_{k+1}*g_j$.
\end{proof}

%
\begin{figure}[t]
\begin{center}
\includegraphics[width=5in]{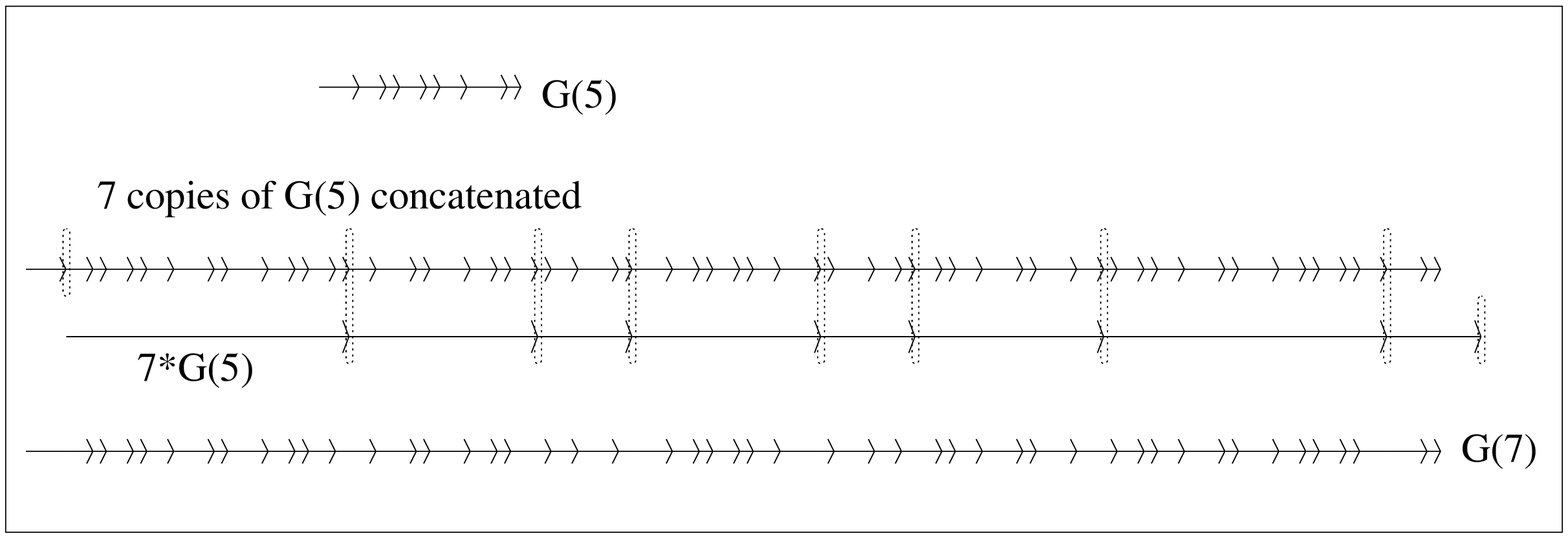}
\caption{\label{RecursFig} Illustrating the recursion that
produces the gaps for the next stage of Eratosthenes sieve.
The cycle of gaps $\pgap(\pml{7})$ is produced from $\pgap(\pml{5})$ by
concatenating $7$ copies, then adding the gaps indicated by
the element-wise product $7*\pgap(\pml{5})$.}
\label{default}
\end{center}
\end{figure}

We call the additions in step R3 the {\em closure} of the two adjacent gaps.  

The first closure in step R3 corresponds to noting the next prime number $p_{k+1}$.
The remaining closures in step R3 correspond to removing from the candidate 
primes the composite numbers whose smallest prime factor is $p_{k+1}$.
From step R2, the candidate primes have the form
 $\gamma + j\cdot \pml{p_k}$, for a generator $\gamma$ of $\Z \bmod \pml{p_k}$.

\noindent {\bf Example: $\pgap(5)$.}
We start with $\pgap(3)=42.$
\begin{itemize}
\item[R1.]$p_{k+1}=5.$
\item[R2.] Concatenate five copies of $\pgap(3)$:
$$4242424242.$$
\item[R3.] Add together the gaps after the initial gap $g=4$ and 
thereafter after cumulative differences of $5*\pgap(3)=20,10$:\\
\begin{tabular}{rcc}
$\pgap(5)$ & $=$ & $4+\overbrace{2424242}^{20}+\overbrace{42\;\;}^{10}$ \\
 & $=$ & $64242462.$ \\
\end{tabular} \\
Note that the last addition wraps around the end of the cycle
and recloses the gap after the first $4$.
\end{itemize}

\noindent{\bf Example: $\pgap(\pml{7})$.}
As a second example of the recursion,
we construct $\pgap(\pml{7})$ from $\pgap(\pml{5})=64242462$.
Figure~\ref{RecursFig} provides an illustration of this construction.

\begin{itemize}
\item[R1.] Identify the next prime, $p_{k+1}= g_1+1 = 7.$
\item[R2.] Concatenate seven copies of $\pgap(\pml{5})$:
$$\scriptstyle 64242462 \; 64242462 \; 64242462 \; 64242462\; 64242462 \; 64242462 \;64242462$$
\item[R3.] Add together the gaps after the leading $6$ and 
thereafter after differences of 
$ 7*\pgap(\pml{5}) = 42, 28, 14, 28, 14, 28, 42, 14 $:
$$\begin{array}{l} 
\pgap(\pml{7}) \; = \\
{\scriptstyle
  6+\overbrace{\scriptstyle 424246264242}^{42}+
 \overbrace{\scriptstyle 4626424}^{28}+\overbrace{\scriptstyle 2462}^{14}+
 \overbrace{\scriptstyle 6424246}^{28}+\overbrace{\scriptstyle 2642}^{14}+
 \overbrace{\scriptstyle 4246264}^{28}+\overbrace{\scriptstyle 242462642424}^{42}+
 \overbrace{\scriptstyle 62 \; }^{14}} \\
=  \; 
 {\it 10}, 2424626424 {\it 6} 62642 {\it 6} 46 {\it 8} 42424 {\it 8} 64 {\it 6} 24626 {\it 6} 
  4246264242, {\it 10}, 2
\end{array}
$$
The final difference of $14$ wraps around the end of the cycle,
 from the addition preceding the final $6$ to the 
addition after the first $6$.
\end{itemize}

\begin{remark}\label{EasyRmk}
The following results are easily established for $\pgap(p_k)$:

\begin{enumerate}
\item The cycle of gaps $\pgap(\pml{p_k})$ consists of $\phi(\pml{p_k})$ gaps
that sum to $\pml{p_k}$.
\item The first difference between closures is $p_{k+1}*(p_{k+1}-1)$,
which removes $p_{k+1}^2$ from the list of candidate primes.
\item The last entry in $\pgap(\pml{p_k})$ is always $2$. 
This difference goes from $-1$ to $+1$ in $\Zmod$.
\item The last difference $p_{k+1}*2$ between closures in step R3, wraps from
$-p_{k+1}$ to $p_{k+1}$ in $\Zmodp$.
\item Except for the final $2$, the cycle of differences is symmetric:
$g_{k,j}=g_{k,\phi(\pml{p_k})-j}$.
\item If $m+1$ consecutive gaps have the same value,
$$g_{k,j}=g_{k,j+1}= \cdots = g_{k,j+m}=g,$$ 
then $g = 0 \bmod p$ for all primes $p \leq m+2$.  Note that this constellation 
corresponds to $m+2$ consecutive primes in arithmetic progression.
\item The middle of the cycle $\pgap(\pml{p_k})$ is the sequence 
$$2^j,2^{j-1},\ldots,42424,\ldots,2^{j-1},2^j$$
in which $j$ is the smallest number such that $2^{j+1}>p_{k+1}$.
\end{enumerate}
\end{remark}

There is an interesting fractal character to the recursion.  To produce the next cycle of gaps
$\pgap(\pml{p_{k+1}})$ from the current one, $\pgap(\pml{p_k})$, we concatenate
$p_{k+1}$ copies of the current cycle, take an expanded copy of the current cycle, and close gaps
as indicated by that expanded copy.  In the discrete dynamic system that we develop below, we 
don't believe that all of the power in this self-similarity has yet been captured.

\subsection{Every possible closure of adjacent gaps occurs exactly once}

\begin{theorem}\label{DelThm}
Each possible closure of adjacent gaps in the cycle $\pgap(\pml{p_k})$
occurs exactly once in the recursive construction of $\pgap(\pml{p_{k+1}}).$
\end{theorem}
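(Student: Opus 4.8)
The plan is to realize the closures performed in step R3 as the orbit of a multiplication map on the units of $\Z\bmod\pml{p_k}$, and then to read off from that orbit exactly which adjacent pair of gaps in $\pgap(\pml{p_k})$ each closure combines.

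First I would set up the bookkeeping. By the correspondence in \eqref{EqGen}, the gaps of $\pgap(\pml{p_k})$ match the generators $\gamma_{k,j}$ of $\Z\bmod\pml{p_k}$, and the $p_{k+1}$ copies concatenated in R2 list (as $1$ plus their running partial sums) exactly the integers up to $\pml{p_{k+1}}+1$ that are coprime to $\pml{p_k}$. A single closure in R3 deletes one such integer $C$ and replaces the two gaps flanking $C$ by their sum. If $C\equiv\gamma_{k,j}\bmod\pml{p_k}$, where I use the convention $\gamma_{k,0}=\gamma_{k,\phi(\pml{p_k})}\equiv 1$, then $C$ lies between the candidates $\equiv\gamma_{k,j-1}$ and $\equiv\gamma_{k,j+1}$, so the two gaps being combined are $g_{k,j}$ and its cyclic successor --- this is precisely the ``closure of adjacent gaps at position $j$'' in $\pgap(\pml{p_k})$. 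Thus a closure occurs for the pair at position $j$ exactly when one of the deleted integers falls in the residue class $\gamma_{k,j}$ modulo $\pml{p_k}$.

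Next I would count and identify. Every closure lowers the number of gaps by one; R2 begins with $p_{k+1}\,\phi(\pml{p_k})$ gaps and R3 must end with $\phi(\pml{p_{k+1}})=(p_{k+1}-1)\,\phi(\pml{p_k})$ gaps, so exactly $\phi(\pml{p_k})$ closures are performed, matching the $\phi(\pml{p_k})$ possible closures. As noted in the proof of Lemma~\ref{RecursLemma}, the integers deleted by these closures are precisely the multiples of $p_{k+1}$ among the residues coprime to $\pml{p_k}$ in the fundamental cycle, that is, the numbers $p_{k+1}\gamma'$ with $\gamma'$ ranging over the $\phi(\pml{p_k})$ units of $\Z\bmod\pml{p_k}$ taken in $[1,\pml{p_k}]$ (the choice $\gamma'=1$ giving the deletion of $p_{k+1}$ itself). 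The position recorded by the closure deleting $p_{k+1}\gamma'$ is the one determined by $p_{k+1}\gamma'\bmod\pml{p_k}$.

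The final step is the group-theoretic punch line: $p_{k+1}$ is a prime larger than $p_k$, so $\gcd(p_{k+1},\pml{p_k})=1$ and multiplication by $p_{k+1}$ permutes $(\Z\bmod\pml{p_k})^{*}$. Hence as $\gamma'$ runs over a complete set of unit representatives, the residues $p_{k+1}\gamma'\bmod\pml{p_k}$ run over all units, each exactly once; equivalently each position $j\in\{1,\dots,\phi(\pml{p_k})\}$ is recorded exactly once, which is the assertion. The step I expect to cost the most care is the identification in the second paragraph --- making the passage ``deleted integer in class $\gamma_{k,j}$'' $\longleftrightarrow$ ``closure of the gaps at cyclic position $j$'' airtight across the seams between concatenated copies, so that the class of $1$ genuinely corresponds to the closure wrapping around the end of the cycle, and so that the first closure (deleting $p_{k+1}$) and the terminal difference $2p_{k+1}$ flagged in Remark~\ref{EasyRmk} fit this indexing; the rest is the counting argument together with the fact that multiplication by a unit is a bijection.
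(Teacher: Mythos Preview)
Your argument is correct and is essentially the same as the paper's: both rest on $\gcd(p_{k+1},\pml{p_k})=1$, which the paper phrases via the Chinese Remainder Theorem (among the $p_{k+1}$ copies $\gamma_{k,j}+i\,\pml{p_k}$ exactly one is divisible by $p_{k+1}$) and you phrase dually as multiplication by $p_{k+1}$ being a bijection on $(\Z/\pml{p_k})^{*}$. Your added counting check and the explicit identification of closures with residue classes are sound and make the same point from the other direction.
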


\begin{proof}
This is an implication of the Chinese Remainder Theorem.
Each entry in $\pgap(\pml{p_k})$ corresponds to one of the generators
of $\Zmod$. The first gap $g_1$ corresponds to $p_{k+1}$, and
thereafter $g_j$ corresponds to $\gamma_{k,j} = 1+\sum_{i=1}^j g_i$.
These correspond in turn to unique combinations of nonzero
residues modulo the primes $2,3,\ldots,p_k$.  

In step R2, we concatenate $p_{k+1}$
copies of $\pgap(\pml{p_k})$.  For each gap $g_j$ in $\pgap(\pml{p_k})$
there are $p_{k+1}$ copies of this gap after step R2, corresponding to 
$$\gamma_{k,j} + i \cdot \pml{p_k} \biggap {\rm for} \gap i = 0, \ldots, p_{k+1}-1.$$
For each copy, 
the combination of residues for $\gamma_{k,j}$ modulo $2,3,\ldots,p_k$
is augmented by a unique residue modulo
$p_{k+1}$.  Exactly one of these has residue $0 \bmod p_{k+1}$, so we
perform $g_{j}+g_{j+1}$ for this copy and only this copy of $g_j$.
\end{proof}

\begin{corollary}
In $\pgap(\pml{p_{k+1}})$ there are at least two gaps of size $g=2p_k$.
\end{corollary}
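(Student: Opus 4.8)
The plan is to bypass the recursion and exhibit two explicit residue classes modulo $\pml{p_{k+1}}$ at which the gap $2p_k$ occurs, using only the Chinese Remainder Theorem. Throughout I assume $k\ge 2$, so that $p_{k-1}$ is defined; for $k=1$ the cycle $\pgap(\pml{3})=42$ contains only one gap of size $4$, so the statement should be read with $k\ge 2$. The single arithmetic observation that drives the argument is elementary: if $1<|j|<p_k$, then the least prime factor of $j$ is at most $|j|<p_k$ and hence is at most the largest prime below $p_k$, namely $p_{k-1}$. Consequently, among the integers $j$ with $1\le|j|\le p_k-1$, only $j=\pm 1$ can be coprime to $\pml{p_{k-1}}$.

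First I would pick, by the Chinese Remainder Theorem, a residue $c$ modulo $\pml{p_{k+1}}$ with
\[
c\equiv 0\pmod{\pml{p_{k-1}}},\qquad c\equiv 1\pmod{p_k},\qquad c\equiv -1\pmod{p_{k+1}},
\]
and show that $c-p_k$ and $c+p_k$ are consecutive generators of $\Z\bmod\pml{p_{k+1}}$, which by definition of the cycle of gaps yields a gap of size $2p_k$ in $\pgap(\pml{p_{k+1}})$. To see both endpoints are generators, I run through the primes $q\le p_{k+1}$: for $q\le p_{k-1}$ they are $\equiv -p_k$ and $\equiv p_k$, neither $\equiv 0$ since $q\nmid p_k$; modulo $p_k$ they both reduce to $c\equiv 1$; and modulo $p_{k+1}$ they reduce to $-(p_k+1)$ and $p_k-1$, both nonzero since $0<p_k-1<p_k+1<p_{k+1}$ for $k\ge 2$ (the inequality $p_k+1<p_{k+1}$ holds because $p_k+1$ is even and exceeds $2$, hence composite). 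To see that no integer strictly between the endpoints is a generator, I check the $2p_k-1$ candidates $c+j$, $|j|\le p_k-1$: the value $c$ itself is even; $c-1\equiv 0\pmod{p_k}$ and $c+1\equiv 0\pmod{p_{k+1}}$ by the defining congruences; and for the remaining values, with $1<|j|<p_k$, the small prime factor $q\le p_{k-1}$ of $j$ divides $c+j$ because $q\mid c$. This produces one gap of size $2p_k$.

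For the second occurrence I would repeat the construction with $c'$ satisfying $c'\equiv 0\pmod{\pml{p_{k-1}}}$, $c'\equiv -1\pmod{p_k}$, and $c'\equiv 1\pmod{p_{k+1}}$; the verification is identical with the roles of $p_k$ and $p_{k+1}$ swapped, and shows $c'-p_k$ and $c'+p_k$ are consecutive generators, giving a second gap of size $2p_k$. The two gaps are distinct: since $p_k\ge 3$, $1\not\equiv -1\pmod{p_k}$, so $c\not\equiv c'\pmod{p_k}$ and hence modulo $\pml{p_{k+1}}$; moreover they cannot share an endpoint, as that would force $4p_k\equiv 0\pmod{\pml{p_{k+1}}}$. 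Alternatively, once a single gap $2p_k$ has been located, the symmetry of the cycle (item (5) of Remark~\ref{EasyRmk}) furnishes its mirror image, which is a different gap because item (7) of the same remark shows the central gaps of the cycle are small, hence not $2p_k\ge 6$.

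I do not expect a serious obstacle; the argument is elementary number theory. The one place that needs care is the middle step — confirming that nothing strictly between $c-p_k$ and $c+p_k$ is a generator — and within it the two exceptional points $c\pm 1$, which are precisely what the congruences $c\equiv 1\pmod{p_k}$ and $c\equiv -1\pmod{p_{k+1}}$ are designed to eliminate. As a sanity check I would verify the smallest case $k=2$ directly: the construction then produces the gaps $1\to 7$ and $23\to 29$, both of size $6=2p_2$, in $\pgap(\pml{5})=64242462$.
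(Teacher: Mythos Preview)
Your proof is correct and takes a genuinely different route from the paper's. The paper argues via the recursion: at the seams between the $p_k$ concatenated copies of $\pgap(\pml{p_{k-1}})$ one finds the constellation $(p_k-1)\,2\,(p_k-1)$; the closures in step R3 produce both $(p_k-1)(p_k+1)$ and $(p_k+1)(p_k-1)$ in $\pgap(\pml{p_k})$; and then one further interior closure in passing to $\pgap(\pml{p_{k+1}})$ yields the two gaps $2p_k$. You instead bypass the recursion entirely and exhibit the two gaps by CRT, choosing $c,c'\equiv 0\pmod{\pml{p_{k-1}}}$ with prescribed residues $\pm 1$ modulo $p_k$ and $p_{k+1}$, and then verifying directly that $c\pm p_k$ (respectively $c'\pm p_k$) are consecutive generators. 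The two arguments are in fact looking at the same objects---your $c$ and $c'$ are precisely the seam locations $j\cdot\pml{p_{k-1}}$ singled out by the paper's closures---but your presentation is self-contained elementary number theory and does not rely on Theorem~\ref{DelThm} or the recursion, whereas the paper's version serves to illustrate that machinery. Your observation that the statement needs $k\ge 2$ is also correct; the paper's own proof implicitly assumes this as well. One minor point: to conclude the two gaps are distinct you only need $c\ne c'$, which you have; the additional remark about shared endpoints is harmless but unnecessary, since adjacent gaps are still distinct gaps.
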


\begin{proof}
In forming $\pgap(\pml{p_{k}})$, in step R2 we concatenate $p_{k}$ copies of
$\pgap(\pml{p_{k-1}})$.  Each copy of $\pgap(\pml{p_{k-1}})$ begins
with the gap $g=p_k-1$ and ends with the constellation $(p_k-1) 2$.
At the transition between copies we have the
sequence $(p_k-1) 2 (p_k-1)$.  In step R3 of forming $\pgap(\pml{p_{k}})$ each of
the two closures takes place, producing the constellations $(p_k-1)(p_k+1)$ and
$(p_k+1)(p_k-1)$ in $\pgap(\pml{p_k})$.  
In forming $\pgap(\pml{p_{k+1}})$ exactly one of the $p_{k+1}$ copies of each of
these constellations is closed, to create two gaps in $\pgap(\pml{p_{k+1}})$ of
size $2p_k$.
\end{proof}

By the symmetry of $\pgap(\pml{p})$ and by the symmetry of the locations of the
closures in step R3, we note that the two gaps $g=2p_k$ are located symmetrically to 
each other in $\pgap(\pml{p_{k+1}})$.

\section{Enumerating gaps, constellations, and driving terms}

By analyzing the application of Theorem~\ref{DelThm} to the recursion, we can derive
exact counts of the occurrences of specific gaps and specific constellations across all
stages of the sieve.  

We start by exploring a few motivating examples, after which we 
describe the general process as a discrete dynamic system -- a population model with initial
conditions and driving terms.  Fortunately, although
the transfer matrix $M_J(p_k)$ for this dynamic system depends on the prime $p_k$, its eigenstructure
is beautifully simple, enabling us to provide correspondingly simple descriptions of the
asymptotic behavior of the populations.  In this setting, the populations are the numbers
of occurrences of specific gaps or constellations across stages of Eratosthenes sieve.

\subsection{Motivating examples}
We start with the cycle of gaps 
$$\pgap(\pml{5})=64242462$$
and study the persistence of its gaps and constellations through later stages of the sieve.

Using the notation $N_s(\pml{p})$ to denote the number of occurrences of the constellation $s$
in the cycle of gaps $\pgap(\pml{p})$, we identify some initial conditions:
\begin{center}
\begin{tabular}{rrrr}
$N_2(\pml{5}) = 3$ & $N_{24}(\pml{5}) = 2$ & $N_{242}(\pml{5}) = 1$ & 
 $N_{42424}(\pml{5}) = 1$ \\
 $N_4(\pml{5}) = 3$ & $N_{42}(\pml{5}) = 2$ & $N_{424}(\pml{5}) = 2$ & 
  \\
 $N_6(\pml{5}) = 2$ & $N_{62}(\pml{5}) = 1$ & $N_{626}(\pml{5}) = 1$ & 
\end{tabular} 
\end{center}

{\bf Enumerating the gaps $g=2$ and $g=4$.}  For the gap $g=2$, we start with
$N_2(\pml{5})=3$.  In forming $\pgap(\pml{7})$, in step R2 we create $7$ copies
of each of the three $2$'s, and from each family of seven copies, in step R3 we lose two of these
seven copies -- one for the closure to the left, and another for the closure to the right.

Could the two closures occur on the same copy of a $2$?  We observe that in step R3, the
distances between closures is governed by the entries in $7*\pgap(\pml{5})$, so the
minimum distance between closures in forming $\pgap(\pml{7})$ is $7*2 = 14$.
Thus the two closures cannot occur on the same copy of a $2$.

So for each $g=2$ in $\pgap(\pml{5})$, in step R2 we create seven copies, and in step R3 we
close two of these seven copies, one from the left and one from the right.  Noting that closures can
contribute new gaps, we observe from $\pgap(\pml{5})$ that no $2$'s or $4$'s will be created through closures.  So the populations of the gaps $g=2$ and $g=4$ are completely described by:
\begin{equation}\label{Eq2System}
\begin{array}{lcl}
N_2(\pml{p_{k+1}}) = (p_{k+1}-2) \cdot N_2(\pml{p_k}) & {\rm with} & 
 N_2(\pml{5}) = 3 \\
N_4(\pml{p_{k+1}}) = (p_{k+1}-2) \cdot N_4(\pml{p_k}) & {\rm with} & 
 N_4(\pml{5}) = 3
\end{array}
\end{equation}
From this we see immediately that at every stage of Eratosthenes sieve, 
$N_2(\pml{p}) = N_4(\pml{p})$, and that the number of gaps $g=2$ in the
cycle of gaps $\pgap(\pml{p})$, denoted $N_2(\pml{p})$, grows superexponentially by
a factor of $p-2$ as we increase the size of the prime $p$ through the stages of the sieve.

{\bf Enumerating the gaps $g=6$ and its driving terms.}
For the gap $g=6$, we count $N_6(\pml{5})=2$.  In forming $\pgap(\pml{7})$, 
we will create seven copies of each of these gaps and close two of the copies for each initial gap.
However, we will also gain gaps $g=6$ from the closures of the constellations $s=24$ and $s=42$.

We call these constellations $s=24$ and $s=42$ {\em driving terms} for the gap $g=6$.  These driving
terms are of length $2$.  We observe that these constellations do not themselves have driving terms.
For $s=24$, we initially have $N_{24}(\pml{5})=2$, and under the recursion that forms
$\pgap(\pml{7})$, we create seven copies of each constellation $s=24$, and we close 
{\em three} of these copies.  The left and right closures remove these copies from the system for
$g=6$, and the middle closure produces a gap $g=6$. 

We can express the system for the population of gaps $g=6$ as:
\begin{equation}\label{Eq6System}
\left[ \begin{array}{c} N_6 \\  N_{24}+N_{42} \end{array}\right]_{\pml{p_{k+1}}}
= \left[\begin{array}{cc} p_{k+1}-2 & 1 \\ 0 & p_{k+1}-3 \end{array}\right]
\left[ \begin{array}{c} N_6 \\ N_{24} + N_{42} \end{array}\right]_{\pml{p_k}}
\end{equation}
with $N_6(\pml{5})=2$ and $N_{24}(\pml{5})=N_{42}(\pml{5})=2$.

By the symmetry of $\pgap(\pml{p})$, we know that 
$N_{24}(\pml{p})=N_{42}(\pml{p})$ for all $p$, but the above approach of recording this
as an addition will help us develop the general form for the dynamic system.

How does the population of gaps $g=6$ compare to that for $g=2$?  In $\pgap(\pml{7})$, 
there are still more gaps $2$ than $6$'s:
$$
 N_6(\pml{7})  =  5\cdot 2 + (2+2) = 14 <  N_2(\pml{7}) = 5\cdot 3 = 15.
$$
There are now $16$ driving terms for $6$:
$ (N_{24} + N_{42})(\pml{7})  =  4 \cdot (2+2)  = 16,$ which help make $6$'s more numerous than
$2$'s in $\pgap(\pml{11})$:
$$
N_6(\pml{11}) =  9\cdot 14 + (16) = 142  >  N_2(\pml{11}) = 9\cdot 15 = 135.
$$
 Thereafter, both populations $N_2(\pml{p})$ and $N_6(\pml{p})$
 are growing by the factor $(p-2)$, and the gap $g=6$ has 
driving terms whose populations grow by the factor $(p-3)$.

{\bf Enumerating the gaps $g=8$ and its driving terms.}
For the gap $g=8$, we have $N_8(\pml{5})=0$; however, there are driving terms of length two $s=26$ and $s=62$, and in this case there is a driving term of length three $s=242$.  No other constellations
in $\pgap(\pml{5})$ sum to $8$.  So how will the population of
the gap $g=8$ evolve over stages of the sieve?

As we have seen with the gaps $g=2, \; 4, \; 6$, in forming $\pgap(\pml{p_{k+1}})$ each gap $g=8$ will initially generate $p_{k+1}$ copies in step R2 of which $p_{k+1}-2$ will survive step R3.  Each instance
of $s=26$ or $s=62$ will generate one additional gap $g=8$ upon the interior closure, two copies will
be lost from the exterior closures, and $p_{k+1}-3$ copies will survive step R3 of the recursion.

The driving term of length three, $s=242$, will add to the populations of the driving terms
of length $2$.
In forming $\pgap(\pml{7})$, we will
create seven copies of $s=242$ in step R2.  In step R3, for the seven copies of $s=242$, 
the two exterior closures increase the sum,
removing the resulting constellation as a driving term for $g=8$; the two interior closures create
driving terms of length two ($s=62$ and $s=26$), and three copies of $s=242$ survive intact.

We now state this action as a general lemma for any constellation, which 
includes gaps as constellations of length one.

\begin{lemma}\label{Lemma2p}
For $p_k \ge 3$, let $s$ be a constellation of sum $g$ and length $j$, such that $g < 2\cdot p_{k+1}$.
Then for each instance of $s$ in $\pgap(\pml{p_k})$, in forming $\pgap(\pml{p_{k+1}})$, 
in step R2 we create $p_{k+1}$ copies of this instance of $s$, and the $j+1$ closures in step R3 
occur in distinct copies.  

Thus, under the recursion at this stage of the sieve,
each instance of $s$ in $\pgap(\pml{p_k})$generates $p_{k+1}-j-1$ copies of $s$ in 
$\pgap(\pml{p_{k+1}})$; the interior closures generate $j-1$ constellations of sum $g$ and
length $j-1$; and the two exterior closures increase the sum of the resulting constellation in two 
distinct copies, removing these from being driving terms for the gap $g$.
\end{lemma}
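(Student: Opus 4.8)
The plan is to follow one instance of the constellation $s$ through steps R2 and R3, identifying exactly which of its copies are touched by a closure, with Theorem~\ref{DelThm} supplying the count and the wide spacing of closures in R3 supplying disjointness.

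First I would pass to the generator picture. An instance of $s=a_1a_2\cdots a_j$ in $\pgap(\pml{p_k})$ is carried by $j+1$ consecutive generators $\gamma_0,\gamma_1,\ldots,\gamma_j$ of $\Zmod$, where $a_i=\gamma_i-\gamma_{i-1}$, so that $\gamma_j-\gamma_0=a_1+\cdots+a_j=g$. Step R2 concatenates $p_{k+1}$ copies of $\pgap(\pml{p_k})$, so this instance reappears in $p_{k+1}$ translates, the $i$th carried by $\gamma_0+i\pml{p_k},\ldots,\gamma_j+i\pml{p_k}$ for $i=0,\ldots,p_{k+1}-1$. A closure in step R3 removes a single generator by merging the two gaps flanking it, so the closures that can disturb this instance of $s$ in any translate are exactly the closures performed at one of the $j+1$ generators $\gamma_0,\ldots,\gamma_j$ (a closure at a generator outside this window leaves all of $a_1,\ldots,a_j$ intact, since it merges only gaps flanking that outside generator). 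These $j+1$ generators are distinct, hence give $j+1$ distinct possible closures of adjacent gaps in $\pgap(\pml{p_k})$; by Theorem~\ref{DelThm} each occurs exactly once in the construction of $\pgap(\pml{p_{k+1}})$, that is, in exactly one of the $p_{k+1}$ translates.

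The crux is to show these $j+1$ closures fall in $j+1$ \emph{distinct} translates. Suppose the closures at $\gamma_a$ and at $\gamma_b$, with $0\le a<b\le j$, were both performed in the same translate $i$. Then $\pgap(\pml{p_{k+1}})$ would contain closures at the positions $\gamma_a+i\pml{p_k}$ and $\gamma_b+i\pml{p_k}$, whose cyclic distance is at most $\gamma_b-\gamma_a\le\gamma_j-\gamma_0=g$. But step R3 places successive closures at cumulative offsets given by the entries of $p_{k+1}*\pgap(\pml{p_k})$, and for $p_k\ge 3$ every gap in $\pgap(\pml{p_k})$ is even and hence at least $2$ (all candidate primes at that stage are odd), so every such offset is at least $2p_{k+1}$; consequently any two distinct closures are at least $2p_{k+1}$ apart around the cycle. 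Since $g<2p_{k+1}$ by hypothesis, this is a contradiction. I expect this to be the only genuine obstacle: the hypothesis $g<2p_{k+1}$ is precisely calibrated to beat the minimum closure spacing $2p_{k+1}$, and the remainder of the argument is bookkeeping.

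Finally I would read off the three consequences. Since each of the $j$ gaps of $s$ is at least $2$ we have $g\ge 2j$, which with $g<2p_{k+1}$ forces $j<p_{k+1}$, so $p_{k+1}-j-1\ge 0$; as the $j+1$ closures occupy $j+1$ distinct translates and a translate keeps $s$ intact if and only if none of $\gamma_0,\ldots,\gamma_j$ is removed there, exactly $p_{k+1}-j-1$ translates of $s$ persist into $\pgap(\pml{p_{k+1}})$. For each of the $j-1$ interior generators $\gamma_1,\ldots,\gamma_{j-1}$, the closure there replaces the adjacent pair $a_\ell,a_{\ell+1}$ by their sum, yielding a constellation of the same total $g$ and length $j-1$, in a translate distinct from all the others by the crux. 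For the two exterior generators $\gamma_0$ and $\gamma_j$, the closure absorbs the flanking gap (which is at least $2$) into $a_1$, respectively $a_j$, so it increases the total of the resulting constellation there, and those two translates no longer host a driving term of total $g$. The case $j=1$ is the already-noted behaviour of a single gap: $p_{k+1}$ copies, two lost to the two exterior closures, no interior closures, and $p_{k+1}-2$ survivors. This establishes the lemma.
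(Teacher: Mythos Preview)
Your proposal is correct and follows essentially the same approach as the paper: invoke Theorem~\ref{DelThm} to ensure that each of the $j+1$ possible closures happens exactly once among the $p_{k+1}$ translates, and use the fact that the minimum spacing between closures in step R3 is $2p_{k+1}$ (since every gap in $\pgap(\pml{p_k})$ is at least $2$) to force these closures into distinct translates under the hypothesis $g<2p_{k+1}$. The paper's own proof is only a short paragraph sketching exactly this idea; your version simply fills in the generator-level bookkeeping and the case accounting (interior versus exterior closures, and the observation $j<p_{k+1}$) that the paper leaves implicit.
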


The proof is a straightforward application of Theorem~\ref{DelThm}, 
but we do want to emphasize the role that the condition $g < 2\cdot p_{k+1}$
plays.  In step R3 of the recursion, as we perform closures across the $p_{k+1}$ concatenated copies
of $\pgap(\pml{p_k})$, the distances between the closures is given by the elementwise product
$p_{k+1} * \pgap(\pml{p_k})$.  Since the minimum gap in $\pgap(\pml{p})$ is $2$, the minimum 
distance between closures is $2\cdot p_{k+1}$.  And the condition $g < 2\cdot p_{k+1}$ ensures 
that the closures will
therefore occur in distinct copies of any instance of the constellation in $\pgap(\pml{p_k})$
created in step R2.

The count in Lemma~\ref{Lemma2p} is scoped to instances of a constellation.
These instances may overlap, but the count still holds.  For example, the gap $g=10$ has a driving
term $s=424$ of length three.  In $\pgap(\pml{5})= 64242462$, the two occurrences of $s=424$
overlap on a $4$.  The exterior closure for one is an interior closure for the other.  The count given
in the lemma tracks these automatically.

We illustrate Lemma~\ref{Lemma2p} in Figure~\ref{SystemFig}.  

\begin{figure}[t]
\centering
\includegraphics[width=4.875in]{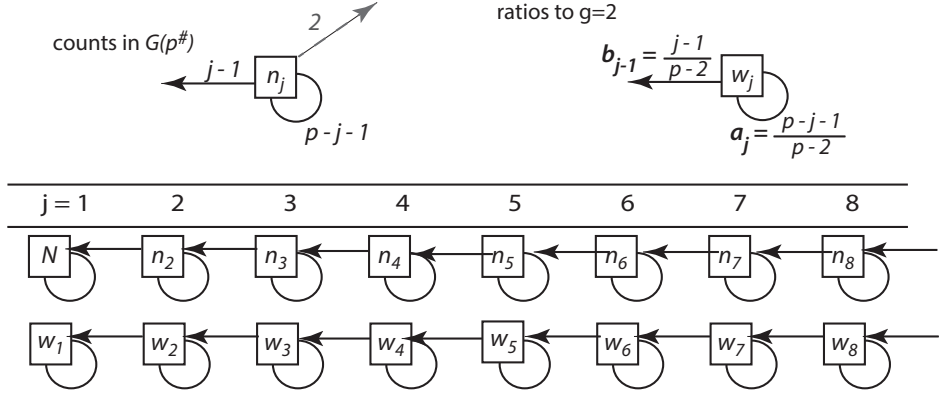}
\caption{\label{SystemFig} This figure illustrates the dynamic system of
Lemma~\ref{Lemma2p} through stages of the recursion for
$\pgap(\pml{p})$.
The coefficients of the system at each stage of the recursion
are independent of the specific gap and its driving terms.  
We illustrate the system for the recursive counts
$N$ and $n_j$ for a gap and its driving terms.  Since the raw counts are superexponential, we take
the ratio $w_j$ of the count for each constellation to the simplest counts $\N{p}{2}=\N{p}{4}$. }
\end{figure}

A direct corollary to Theorem~\ref{DelThm} and Lemma~\ref{Lemma2p} gives us an
exact description of the growth of the populations of various constellations across all stages
of Eratosthenes sieve.  (Keep in mind that a gap is a constellation of length $1$.)

\begin{corollary}\label{CorGrowth}
If $s$ is any constellation in $\pgap(\pml{p_k})$ of length $j$ and sum $g < 2p_{k+1}$,
with $n_{s,j+1}(\pml{p_k})$ driving terms of length $j+1$ in $\pgap(\pml{p_k})$,
then 
$$N_s(\pml{p_{k+1}}) = (p_{k+1}-j-1)\cdot N_s(\pml{p_k}) + 1\cdot n_{s,j+1}(\pml{p_k}).$$
\end{corollary}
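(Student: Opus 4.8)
The plan is to derive Corollary~\ref{CorGrowth} directly from Lemma~\ref{Lemma2p} by a bookkeeping argument, tracking where the occurrences of the constellation $s$ in $\pgap(\pml{p_{k+1}})$ come from. There are exactly two mechanisms: an occurrence of $s$ that survives intact from $\pgap(\pml{p_k})$ through the recursion, and an occurrence of $s$ that is newly created by an interior closure acting on a longer constellation. I would establish that these two sources are mutually exclusive and jointly exhaustive, and then count each.

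First I would handle the surviving occurrences. By Theorem~\ref{DelThm}, in step R2 each instance of $s$ in $\pgap(\pml{p_k})$ produces $p_{k+1}$ copies, and since $g < 2p_{k+1}$, Lemma~\ref{Lemma2p} tells us the $j+1$ relevant closures (the two exterior and the $j-1$ interior) fall in distinct copies; hence exactly $p_{k+1}-(j+1)$ of the copies are untouched and reappear as instances of $s$ in $\pgap(\pml{p_{k+1}})$. This accounts for the term $(p_{k+1}-j-1)\cdot N_s(\pml{p_k})$. Next, the newly created occurrences: an instance of $s$ (length $j$, sum $g$) in $\pgap(\pml{p_{k+1}})$ that was not already present must arise from closing a gap interior to a longer constellation of the same sum $g$ in $\pgap(\pml{p_k})$ — because a closure merges two adjacent gaps into one, reducing the length by one while preserving the running sum over that stretch, and only an interior closure (not one at an endpoint) leaves the sum $g$ of the length-$j$ window unchanged. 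The only candidates are the driving terms of length $j+1$, of which there are $n_{s,j+1}(\pml{p_k})$. By Lemma~\ref{Lemma2p} applied to each such length-$(j+1)$ driving term, among its $p_{k+1}$ copies the $j$ interior closures each create a length-$j$ constellation of sum $g$; but I need to be careful that exactly one of those $j$ interior closures yields the particular pattern $s$ rather than some other length-$j$ constellation of the same sum. This is where the precise definition of a driving term matters: a driving term of length $j+1$ for $s$ is (implicitly) a constellation obtained from $s$ by splitting one of its $j$ gaps into two, so closing that same gap-pair recovers $s$ and contributes exactly $1$, giving the term $1\cdot n_{s,j+1}(\pml{p_k})$.

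The main obstacle I expect is precisely this last point — pinning down that each length-$(j+1)$ driving term contributes exactly one new copy of $s$ (not zero, not more), and that no shorter or differently-shaped constellation can contribute. I would resolve it by noting that a closure is the inverse operation of a split: closing the gap-pair $g_m + g_{m+1}$ recovers $s$ if and only if the length-$(j+1)$ constellation is $s$ with its $m$-th gap split as $g_m, g_{m+1}$, and by Lemma~\ref{Lemma2p} exactly one copy among the $p_{k+1}$ copies undergoes that interior closure. Summing the two contributions, and invoking Theorem~\ref{DelThm} once more to guarantee no double counting (each closure in forming $\pgap(\pml{p_{k+1}})$ happens in exactly one copy), yields
\begin{equation*}
N_s(\pml{p_{k+1}}) = (p_{k+1}-j-1)\cdot N_s(\pml{p_k}) + 1\cdot n_{s,j+1}(\pml{p_k}),
\end{equation*}
as claimed. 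A secondary subtlety worth a remark is the overlap phenomenon already flagged after Lemma~\ref{Lemma2p} (e.g.\ overlapping copies of $424$ in $\pgap(\pml{5})$): since the counts $N_s$ and $n_{s,j+1}$ are defined as counts of instances, and Lemma~\ref{Lemma2p} is stated per instance, the recursion respects overlaps automatically and no separate argument is needed.
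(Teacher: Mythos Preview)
Your proposal is correct and matches the paper's approach exactly: the paper offers no explicit proof, stating only that the result is ``a direct corollary to Theorem~\ref{DelThm} and Lemma~\ref{Lemma2p},'' and your bookkeeping argument is precisely the natural way to unpack that claim. Your care with the two subtleties --- that each length-$(j+1)$ driving term yields $s$ under exactly one interior closure (which follows since a second closure position would force some gap to be zero), and that overlapping instances are handled automatically because Lemma~\ref{Lemma2p} is stated per instance --- goes beyond what the paper spells out but is entirely in its spirit.
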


From this corollary, we note that the coefficients for the population model do not depend on the
constellation $s$.  The first-order growth of the population of every constellation $s$ of 
length $j$ and sum $g < 2p_{k+1}$ is given by the factor $p_{k+1}-j-1$.  This is independent
of the sequence of gaps within $s$.  As we have seen above, constellations may differ
significantly in the populations of their driving terms.

Although the asymptotic growth of all constellations of length $j$
is equal, the initial conditions and driving terms are important.
Brent \cite{Brent} made analogous observations for single gaps ($j=1$).
His Table 2 indicates the importance of the lower-order effects in
estimating relative occurrences of certain gaps.

\subsection{Relative populations of $g=2, \; 6, \; 8, \; 10$}
What can we say about the relative populations of the gaps $g=2, \; 6, \; 8$ over later stages of the
sieve?  The population of every gap grows by a factor of $p-2$.  The populations differ by the presence 
of driving terms of various lengths and by the initial conditions.

We proceed by normalizing each population by the population of the gap $g=2$.  
To compare the populations of any gap $g$ to the gap $2$ over later stages of the sieve, 
we take the ratio
\begin{equation}\label{EqDefW}
\Rat{1}{g}(\pml{p}) = \frac{ N_g(\pml{p})}{N_2(\pml{p})}.
\end{equation}
Letting $n_{g,j}(\pml{p})$ denote the number of all driving terms of sum $g$ and length $j$
in the cycle of gaps $\pgap(\pml{p})$, we can extend this definition to
\begin{equation*}
\Rat{j}{g}(\pml{p}) = \frac{ n_{g,j}(\pml{p})}{N_2(\pml{p})}.
\end{equation*}

\renewcommand\arraystretch{2}
These ratios for the gaps $g=6$ and $g=8$ are given by the $3$-dimensional dynamic system:
\begin{eqnarray*}
\left[\begin{array}{c} \Rat{1}{g} \\ \Rat{2}{g} \\ \Rat{3}{g} \end{array} \right] _{\pml{p_{k+1}}} & = &
\left[\begin{array}{ccc} 1 & \frac{1}{p_{k+1}-2} & 0 \\ 0 & \frac{p_{k+1}-3}{p_{k+1}-2} & \frac{2}{p_{k+1}-2} \\
 0 & 0 & \frac{p_{k+1}-4}{p_{k+1}-2} \end{array} \right] \cdot
\left[\begin{array}{c} \Rat{1}{g} \\ \Rat{2}{g} \\ \Rat{3}{g} \end{array} \right]_{\pml{p_k}} \\
{\rm or} & & \\
 \wgp{g}{\pml{p_{k+1}}} & = &
   \MJP{3}{p_{k+1}} \cdot \wgp{g}{\pml{p_k}} 
\end{eqnarray*}
\renewcommand\arraystretch{1}
with initial conditions
\begin{equation*}
\begin{array}{ccc}
\wgp{6}{\pml{5}}  = \left[ \begin{array}{c} 2/3 \\ 4/3 \\ 0 \end{array}\right], & 
\wgp{8}{\pml{5}} = \left[ \begin{array}{c} 0 \\ 2/3 \\ 1/3 \end{array}\right] 
\end{array}
\end{equation*}

For this dynamic system, our attention turns to the $3 \times 3$ system matrix $M_3(p)$ and its
eigenstructure.
Here the system matrix depends on the prime $p$ (but not on the gap $g$), so that as we iterate, 
we have to keep track of this dependence.
$$
\wgp{g}{\pml{p_k}}  =  \MJP{3}{p_k} \cdot \MJP{3}{p_{k-1}} \cdots \MJP{3}{p_1} \wgp{g}{\pml{p_0}} 
$$

A simple calculation shows that we are in luck.  The eigenvalues of $M_3(p)$ depend on $p$
but the eigenvectors do not.  We write the eigenstructure of $M_3(p)$ as
$$ \MJP{3}{p} =  R_3 \cdot \left. \Lambda_3 \right|_p \cdot L_3$$
in which $\Lambda(p)$ is the diagonal matrix of eigenvalues, $R$ is the matrix of right eigenvectors,
and $L$ is the matrix of left eigenvectors, such that $L\cdot R = I$.

If it is true that the eigenvectors do not depend on $p$, then the iterative system simplifies:
\begin{eqnarray*}
\wgp{g}{\pml{p_k}} &  = & \MJP{3}{p_k} \cdot \MJP{3}{p_{k-1}} \cdots \MJP{3}{p_1} \wgp{g}{\pml{p_0}} \\
  & = & R \cdot \AJP{3}{p_k} \cdot \AJP{3}{p_{k-1}} \cdots \AJP{3}{p_1} \cdot L \wgp{g}{\pml{p_0}} 
\end{eqnarray*}
Here the dependence on $p$ leads to a product of diagonal matrices.  We exhibit the full
eigenstructure for dimension $3$, to confirm that the eigenvectors do not depend on $p$,
so that we can complete the calculations.
\begin{equation*}
\begin{array}{rcccc} \MJP{3}{p} & = & R_3 & \AJP{3}{p} & L_3 \\
 & = & \left[\begin{array}{rrr} 1 & -1 & 1 \\ 0 & 1 & -2 \\ 0 & 0 & 1 \end{array}\right]
 \cdot & \left[\begin{array}{rcc} 1 & 0 & 0 \\ 0 & \frac{p-3}{p-2} & 0 \\ 0 & 0 & \frac{p-4}{p-2} \end{array}\right] 
  & \cdot \left[\begin{array}{rrr} 1 & 1 & 1 \\ 0 & 1 & 2 \\ 0 & 0 & 1 \end{array}\right]
  \end{array}
 \end{equation*}
 
If we fix $p_0$, then we can define $M^k_3 = M_3(p_k) \cdots M_3(p_1)$:
\begin{equation*}
\begin{array}{rcccc} M^k_3 & = & R_3 & \AJP{3}{p_k} \cdots \AJP{3}{p_1} & L_3 \\
 & = & \left[\begin{array}{rrr} 1 & -1 & 1 \\ 0 & 1 & -2 \\ 0 & 0 & 1 \end{array}\right]
 \cdot & \left[\begin{array}{rcc} 1 & 0 & 0 \\ 0 & \prod_{p_1}^{p_k} \frac{p-3}{p-2} & 0 \\ 
   0 & 0 & \prod_{p_1}^{p_k} \frac{p-4}{p-2} \end{array}\right] 
  & \cdot \left[\begin{array}{rrr} 1 & 1 & 1 \\ 0 & 1 & 2 \\ 0 & 0 & 1 \end{array}\right]
  \end{array}
 \end{equation*}
 
 Let $a_2^k = \prod_{p_1}^{p_k} \frac{p-3}{p-2}$, and $a_3^k = \prod_{p_1}^{p_k} \frac{p-4}{p-2}$.
 
 So for any gap $g < 2p_1$ that has driving terms of a maximum length of $3$, once we know the 
 initial populations in $\pgap(\pml{p_0})$, we can use the eigenstructure
 of $M^k_3$ to completely characterize the populations of $g$ and its driving terms in
 a very compact form.  Starting with the initial conditions
 $$
\wgp{g}{\pml{p_0}}  = \left[ \begin{array}{c} 
 w_{g,1} \\ w_{g,2} \\ w_{g,3} \end{array} \right]_{\pml{p_0}},
 $$
 we apply the left eigenvectors $L_3$ to obtain the coordinates relative to the basis
 of right eigenvectors $R_3$.  After this transformation, we can apply the actions of 
 the eigenvalues $\Lambda_3(p)$ directly to the individual right eigenvectors.  
\begin{eqnarray*}
\wgp{g}{\pml{p_k}} & = & M^k_3 \cdot \wgp{g}{\pml{p_0}} \\
 & = & R_3 \cdot \AJP{3}{p_k} \cdots \AJP{3}{p_1} \cdot L_3 \cdot \wgp{g}{p_0} \\
 & = & \left[\begin{array}{rrr} 1 & -1 & 1 \\ 0 & 1 & -2 \\ 0 & 0 & 1 \end{array}\right]
 \cdot  \left[\begin{array}{rrr} 1 & 0 & 0 \\ 0 & a_2^k & 0 \\ 
   0 & 0 & a_3^k \end{array}\right] 
  \cdot \left[\begin{array}{rrr} 1 & 1 & 1 \\ 0 & 1 & 2 \\ 0 & 0 & 1 \end{array}\right] \cdot 
  \left[ \begin{array}{c} w_{g,1} \\ w_{g,2} \\ w_{g,3} \end{array} \right] _{\pml{p_0}}\\
  & = & \left[\begin{array}{rrr} 1 & -1 & 1 \\ 0 & 1 & -2 \\ 0 & 0 & 1 \end{array}\right]
 \cdot  \left[\begin{array}{rrr} 1 & 0 & 0 \\ 0 & a_2^k & 0 \\ 
   0 & 0 & a_3^k \end{array}\right] 
  \cdot  \left[ \begin{array}{r} w_{g,1}+ w_{g,2}+w_{g,3} \\ 
   w_{g,2}+2 w_{g,3} \\ w_{g,3} \end{array} \right]_{\pml{p_0}} \\
   & = & \left[\begin{array}{rrr} 1 & -1 & 1 \\ 0 & 1 & -2 \\ 0 & 0 & 1 \end{array}\right]
  \cdot  \left[ \begin{array}{r} w_{g,1}+ w_{g,2}+w_{g,3} \\ 
   a_2^k (w_{g,2}+2 w_{g,3}) \\ a_3^k \; w_{g,3} \end{array} \right]_{\pml{p_0}} \\
   & = & \left[ \begin{array}{r} (w_{g,1}+ w_{g,2}+w_{g,3}) - a_2^k(w_{g,2}+2w_{g,3}) + a_3^k \; w_{g,3} \\ 
   a_2^k (w_{g,2}+2 w_{g,3}) - 2 a_3^k \; w_{g,3} \\ a_3^k \; w_{g,3} \end{array} \right]_{\pml{p_0}}
\end{eqnarray*}
 
 Right away we observe that the asymptotic ratio $w_{g,1}(\infty)$ of the gap $g$ to the gap $2$ is the sum
 of the initial ratios of all of $g$'s driving terms.  We also observe that the ratio converges
 to the asymptotic value as quickly as $a_2^k \fto 0$.  While $a_3^k$ becomes small pretty
 quickly, the convergence of $a_2^k$ is slow. Figure~\ref{AjkFig} plots $a_2^k$ and $a_3^k$
 for $p_0=13$ up to $p \approx 3\cdot 10^{15}$.

\begin{figure}[t]
\centering
\includegraphics[width=5in]{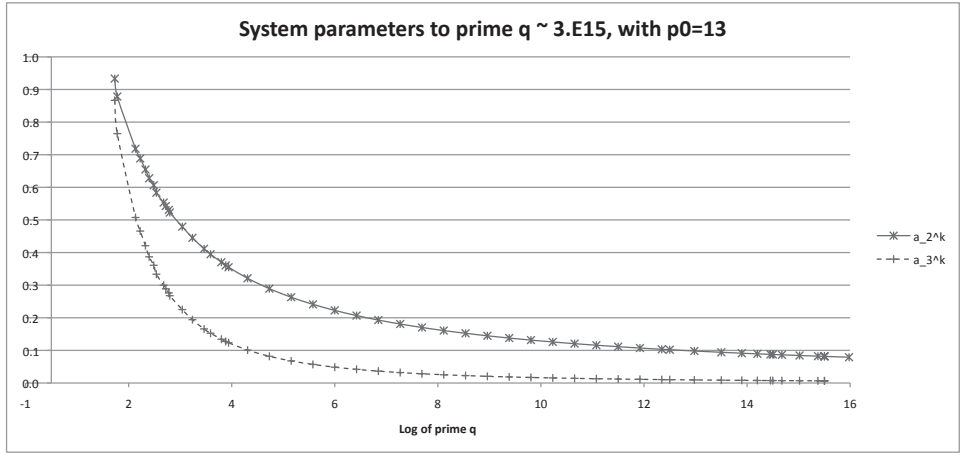}
\caption{\label{AjkFig} A graph of $a_2^k$ and $a_3^k$, with $p_0=13$, up to 
about $p \approx 3\cdot 10^{15}$. The dominant eigenvalue for $M_J$ is $1$, the second
eigenvalue is $a_2$ and the third $a_3$.  So the rate of convergence to the asymptotic 
ratio $w_{g,1}(\infty) = N_g / N_2$ is governed by how quickly $a_2^k \fto 0$. }
\end{figure}
 
 Let us apply the above analysis to all of the gaps that satisfy the required conditions.  Using
 $p_0=5$, we observe in $\pgap(\pml{5})=64242462$ that the smallest sum of a constellation
 of length $4$ is $12$, for $s=2424$.  So we cannot apply the analysis to $g=12$, but we can
 for the gaps $g = 6, \; 8, \; 10$.
 
 \begin{center}
 \begin{tabular}{|r|rrr|r|} \hline
\multicolumn{4}{|c|}{$\pgap(\pml{5})=64242462$ } & \\
$g$ & $w_{g,1}(\pml{5})$ & $w_{g,2}(\pml{5})$ & $w_{g,3}(\pml{5})$ & $w_{g,1}(\infty)$ \\  \hline
$6$ & $2/3$ & $4/3$ & $0$ & $2$ \\
$8$ & $0$ & $2/3$ & $1/3$ & $1$ \\
$10$ & $0$ & $2/3$ & $2/3$ & $4/3$ \\ \hline
 \end{tabular}
\end{center}

To obtain the asymptotic ratio $w_{g,1}(\infty)$, we simply add together the initial 
ratios of all driving terms.  These results tell us that as quickly as $a_2^k \fto 0$,
the number of occurrences of the gap $g=6$ in the
cycle of gaps $\pgap(\pml{p})$ for Eratosthenes sieve approaches double the number
of gaps $g=2$.  Despite having driving terms of length two and of length three, the number of
gaps $g=8$ approaches the number of gaps $g=2$ in later stages of the sieve, and the
ratio of the number of gaps $g=10$ to the number of gaps $g=2$ approaches $4/3$.

These are {\em not} probabilistic estimates.  These ratios are based on the actual counts of the 
populations of these gaps and their driving terms across stages of Eratosthenes sieve.

\section{First set of results on gaps and constellations for Eratosthenes sieve}
Before developing the general dynamic system that describes the population of any gap $g=2k$
in the cycle of gaps, we pause to list the following results which we can already establish.

\begin{itemize}
\item {\em Twin Generators}. The number of gaps $g=2$ in the cycle of gaps
$\pgap(\pml{p_k})$ is
$$N_2(\pml{p_k}) = \prod_{q=3}^{p_k} (q-2).$$  
{\bf Proof:} This is a direct application of Corollary~\ref{CorGrowth}.
\item {\em Primorial conjecture for $6=\pml{3}$}.  From our work above on the $3$-dimensional
dynamic system, we have calculated the asymptotic ratio between the numbers of gaps $6=\pml{3}$
and $2=\pml{2}$,
$$ w_{6,1}(\infty) = \lim_{q \fto \infty} \frac{N_{\pml{3}}(\pml{q})}{N_{\pml{2}}(\pml{q})} = 2$$
But $2 = \frac{3-1}{3-2}$, which is the ratio of occurrences implied by Hardy and Littlewood's 
Conjecture B.
\item {\em ET Spikes}.  For gaps in the cycles of gaps,
$$ \lim \sup \frac{g_{i+1}}{g_i} = \infty \gap {\rm and} \gap \lim \inf \frac{g_{i+1}}{g_i} = 0.$$
In particular, for $g_i=2$, the adjacent gaps $g_{i-1}$ and $g_{i+1}$ become arbitrarily large in 
later stages of the sieve. 

{\bf Proof:} This is a direct result of Theorem~\ref{DelThm}.  
Let $s_k = 2\tilde{g}_k$
be the constellation of length two in $\pgap(\pml{p_k})$ with first gap $2$ and second gap 
$\tilde{g}_k$ such that $\tilde{g}_k$ is the largest that occurs among all such constellations of length two.
$$\tilde{g}_k = \max \left\{ g \st s=2g \; {\rm occurs \; in} \; \pgap(\pml{p_k}) \right\}$$  
In forming
$\pgap(\pml{p_{k+1}})$ the closure to the right of $\tilde{g}_k$ occurs exactly once.
Thus $\tilde{g}_{k+1} > \tilde{g}_k$, and we have our result for $\limsup$. 

By symmetry of $\pgap(\pml{p})$
the constellation $\tilde{g}_k 2$ also occurs, and we thereby have our result for $\liminf$.
\item {\em ET Superlinear growth}.  For every $k > 1$ there is a cycle of gaps $\pgap(\pml{p})$
with a constellation of $k$ consecutive gaps such that
$$ g_{n+1} < g_{n+2} < \cdots < g_{n+k}.$$
This constellation persists across all subsequent stages of the sieve, and its population increases
by the factor $p-k-1$ at each stage.  

{\bf Proof:}   In the middle of the cycle of gaps $\pgap(\pml{p_i})$ there occurs the
constellation
$$ \tilde{s}_j = 2^j \; 2^{j-1} \ldots 842 \; 4 \; 248 \ldots 2^{j-1} \; 2^j$$
in which $j$ is the smallest number such that $2^{j+1} > p_{i+1}$.  
For a given $k$, we take $p_i$ large enough so that $j > k$.  Then the right half of $\tilde{s}_j$
is the desired constellation, and it satisfies the condition $|s| = 2^{j+1}-2 < 2 p_{i+1}.$

The persistence of the constellation and the growth of its population by the factor
$p-k-1$ is given by Corollary~\ref{CorGrowth}. 
\item {\em ET Superlinear decay}.  For every $k > 1$ there is a cycle of gaps $\pgap(\pml{p})$
with a constellation of $k$ consecutive gaps such that
$$g_{n+1} >  g_{n+2} > \cdots  > g_{n+k}.$$
This constellation persists across all subsequent stages of the sieve, and its population increases
by the factor $p-k-1$ at each stage.  {\bf Proof:}  The desired constellation is the left 
half of $\tilde{s}_j$.
\end{itemize}

\section{A model for populations across iterations of the sieve}
We now identify a discrete dynamic system that provides exact counts
of a gap and its driving terms.  These raw counts grow
superexponentially, and so to better understand their behavior we take the ratio
of a raw count to the number of gaps $g=2$ at each stage of the sieve.
In the work above we created and examined this dynamic system for driving
terms up to length $3$.  Here we generalize this approach by considering driving
terms up to length $J$, for any $J$.

Fix a sufficiently large size $J$.
For any gap $g$ that has driving terms of lengths up to $j$, with $j \le J$, we form a vector
of initial values $\left. \bar{w}\right|_{p_{0}}$, whose $i^{\rm th}$ entry is the ratio
of the number of driving terms for $g$ of length $i$ in $\pgap(\pml{p_0})$
to the number of gaps $2$ in this cycle of gaps.

Generalizing our work for $J=3$ above, we model the population of the gap $g$ and its
driving terms across stages of Eratosthenes sieve as a discrete dynamic system.
\begin{eqnarray*}
\left. \bar{w}\right|_{\pml{p_k}} & = & M_J(p_k) \cdot \left. \bar{w}\right|_{\pml{p_{k-1}}} \\
 & = & \left[\begin{array}{cccccc}
1 & b_1 & 0 & \cdots & & 0 \\
0  & a_2 & b_2 & \ddots &  & 0 \\
 & 0 & a_3 & b_3 & \ddots & 0 \\
  \vdots & & \ddots & \ddots & \ddots & 0 \\
  0 & & \cdots & & a_{J-1} & b_{J-1} \\
  0 & & \cdots & & 0 & a_J \end{array} \right]_{p_k} \cdot \left. \bar{w}\right|_{\pml{p_{k-1}}} 
\end{eqnarray*}
in which
\begin{equation}\label{Eqaj}
a_j(p)  =  \frac{p-j-1}{p-2} \biggap {\rm and} \biggap b_j(p) = \frac{j}{p-2}.
\end{equation}

Iterating this discrete dynamic system from the initial conditions at $p_0$ up through $p_k$, we have
\begin{eqnarray*}
\left. \bar{w}\right|_{\pml{p_{k}}} & = & \left. M_J \right|_{p_{k}} \cdot \left. \bar{w}\right|_{\pml{p_{k-1}}} \\
 & = & M_J^k  \cdot \left. \bar{w}\right|_{\pml{p_{0}}} 
\end{eqnarray*}
The matrix $M_J$ does not depend on the gap $g$.  It does depend on the prime $p_k$,
and we use the exponential notation $M_J^k$ to indicate the product of the $M$'s over
the indicated range of primes.

That $M_J$ does not depend on the gap $g$ is interesting.  This means that the recursion 
treats all gaps fairly.  The recursion itself is not biased toward certain gaps or constellations.
Once a gap has driving terms in $\pgap(\pml{p})$, the populations across all further stages of 
the sieve are completely determined.

$M_J^k$ applies to all constellations whose driving terms have length $j \le J$; and we 
continue to use the exponential notation to denote the product over the sequence of primes from
$p_1$ to $p_k$: e.g.
$$ M_J^k =  \MJP{J}{p_k} \cdot  \MJP{J}{p_{k-1}} \cdots \MJP{J}{p_1}.
$$ 
With $M_J^k$ we can calculate the ratios $\Rat{j}{g}(p_k)$
for the complete system of driving terms, relative to the
population of the gap $2$, for the cycle of gaps $\pgap(\pml{p_k})$ (here, $p_k$ is the $k^{\rm th}$
prime after $p_0$).  With $J=3$ we calculated above the ratios for $g=6, \; 8, \; 10.$
For $g=12$ we need $J=4$, and for $g=30$, we need $J=8$.

Fortunately, we can completely describe the eigenstructure for $\left. M_J\right|_{p}$,
and even better -- {\em the eigenvectors for $M_J$ do not depend on the prime $p$}.  This means
that we can use the eigenstructure to provide a simple description of the behavior of this 
iterative system as $k \fto \infty$.

\subsection{Eigenstructure of $M_J$}
We list the eigenvalues, the left eigenvectors and the right eigenvectors for $M_J$, writing these
in the product form
$$ M_J  =  R \cdot \Lambda \cdot L $$
with $LR = I$.
For the general system $M_J$, the upper triangular entries of $R$ and $L$ are binomial coefficients,
with those in $R$ of alternating sign; and the eigenvalues are the $a_j$ defined in Equation~\ref{Eqaj}
above.
\begin{eqnarray*}
R_{ij} & = & \left\{ \begin{array}{cl}
(-1)^{i+j}\Bi{j-1}{i-1} & {\rm if} \; i \le j \\ & \\
0 & {\rm if} \; i > j \end{array} \right. \\
& & \\
\Lambda & = & {\rm diag}(1, a_2, \ldots, a_J) \\
 & & \\
L_{ij}  & = & \left\{ \begin{array}{ll}
\Bi{j-1}{i-1} & {\rm if} \; i \le j \\ & \\
0 & {\rm if} \; i > j \end{array} \right. 
\end{eqnarray*}

For any vector $\bar{w}$, multiplication by the left eigenvectors 
(the rows of $L$) yields the 
coefficients for expressing this vector of initial conditions over the basis given by the 
right eigenvectors (the columns of $R$):
$$ \bar{w} =  (L_{1 \cdot} \bar{w}) R_{\cdot 1} + \cdots + (L_{J \cdot} \bar{w}) R_{\cdot J}
$$

\begin{lemma}\label{AsymLemma}
Let $g$ be a gap and $p_0$ a prime such that $g < 2 p_1$.
In $\pgap(\pml{p_0})$ let the initial ratios for $g$ and its driving terms be given by
$\wgp{g}{\pml{p_0}}$.
Then the ratio of occurrences of this gap $g$
to occurrences of the gap $2$ in $\pgap(\pml{p})$ as $p \fto \infty$ converges to
the sum of the initial ratios across the gap $g$ and all its driving terms:
$$
\Rat{1}{g}(\infty) = L_{1 \cdot} \wgp{g}{\pml{p_0}} = \sum_j \left. \Rat{j}{g} \right|_{\pml{p_0}}.
$$
\end{lemma}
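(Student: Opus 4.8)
The plan is to diagonalize the long iteration and show that every eigenvalue of $M_J$ except the leading one $1$ is driven to $0$ in the limit, so that $M_J^k$ collapses onto the rank-one projection $R_{\cdot 1}L_{1\cdot}$. First, because $g < 2p_1 \le 2p_{k+1}$ for every $k \ge 0$, Lemma~\ref{Lemma2p} and Corollary~\ref{CorGrowth} apply at every stage from $\pml{p_0}$ onward, so the ratio vector genuinely satisfies the stated linear recursion, $\wgp{g}{\pml{p_k}} = \MJP{J}{p_k}\cdots\MJP{J}{p_1}\,\wgp{g}{\pml{p_0}} = M_J^k\,\wgp{g}{\pml{p_0}}$. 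Substituting the eigenstructure $\MJP{J}{p} = R\cdot\AJP{J}{p}\cdot L$ with $LR = I$ and with $R$, $L$ independent of $p$, the interior $LR$ factors collapse to the identity, so
$$M_J^k = R\cdot\big(\AJP{J}{p_k}\cdots\AJP{J}{p_1}\big)\cdot L,$$
and since the $\AJP{J}{p}$ are diagonal, the middle factor is the diagonal matrix whose $j$-th entry is $A_j^k := \prod_{i=1}^{k} a_j(p_i)$, with $a_1\equiv 1$ and $a_j(p) = \frac{p-j-1}{p-2}$ for $j \ge 2$.

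Next I would show $A_j^k \to 0$ for each $j \ge 2$. Since $a_j(p) = 1 - \frac{j-1}{p-2} \to 1$ as $p \to \infty$, for all large $i$ we have $a_j(p_i) \in (0,1)$, and $1 - x \le e^{-x}$ then gives, for a suitable finite constant $C_j$ absorbing the finitely many early factors,
$$0 \le A_j^k \le C_j\,\exp\!\Big(-(j-1)\sum_{i}\tfrac{1}{p_i-2}\Big);$$
the divergence of $\sum_p 1/p$ forces the exponent to $-\infty$, so $A_j^k \to 0$. Hence $\AJP{J}{p_k}\cdots\AJP{J}{p_1} \to \mathrm{diag}(1,0,\ldots,0)$, and since $R$ and $L$ are fixed matrices, continuity of matrix multiplication yields $M_J^k \to R\cdot\mathrm{diag}(1,0,\ldots,0)\cdot L = R_{\cdot 1}L_{1\cdot}$.

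Finally I read off these two vectors from the given formulas: $R_{i1} = (-1)^{i+1}\binom{0}{i-1} = \delta_{i1}$, so $R_{\cdot 1}$ is the first standard basis vector, while $L_{1j} = \binom{j-1}{0} = 1$ for every $j$, so $L_{1\cdot} = (1,1,\ldots,1)$. Therefore
$$\wgp{g}{\pml{p_k}} \;\fto\; R_{\cdot 1}\big(L_{1\cdot}\,\wgp{g}{\pml{p_0}}\big) = \Big(\sum_j \left.\Rat{j}{g}\right|_{\pml{p_0}}\Big)R_{\cdot 1},$$
and its first coordinate is exactly $\Rat{1}{g}(\infty) = L_{1\cdot}\,\wgp{g}{\pml{p_0}} = \sum_j \left.\Rat{j}{g}\right|_{\pml{p_0}}$, which is the claim.

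The only genuine analytic ingredient is the divergence of $\sum_p 1/p$, which kills every non-leading eigenvalue product; the rest is binomial bookkeeping together with continuity of matrix multiplication, so I do not expect a real obstacle. The one place that would need care in a fully self-contained write-up is confirming that $R$, $L$, and the $a_j$ really form the eigenstructure of $M_J$ with $LR = I$ — equivalently the binomial-inversion identity $\sum_m (-1)^{i+m}\binom{m-1}{i-1}\binom{j-1}{m-1} = \delta_{ij}$ — but this is asserted in the excerpt and may simply be invoked.
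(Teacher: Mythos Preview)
Your proof is correct and follows essentially the same approach as the paper: decompose via the eigenstructure $M_J = R\,\Lambda\,L$, observe that the subdominant eigenvalue products $a_j^k$ tend to $0$, and read off the limit from $R_{\cdot 1}=e_1$ and $L_{1\cdot}=(1,\ldots,1)$. You supply more detail than the paper does---in particular, an explicit argument for $a_j^k\to 0$ via the divergence of $\sum_p 1/p$ and the bound $1-x\le e^{-x}$, and an explicit check that $g<2p_1$ keeps Lemma~\ref{Lemma2p} valid at every stage---whereas the paper simply asserts that the $a_j^k$ decay to $0$; but the strategy is identical.
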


\begin{proof}
Let $g$ have driving terms up to length $J$.  Then the ratios $\wgp{g}{\pml{p}}$ 
are given by  the iterative linear system
$$
\wgp{g}{\pml{p_k}} = M_J^k \cdot  \wgp{g}{\pml{p_0}}.
$$
From the eigenstructure of $M_J$, we have
$$
\wgp{g}{\pml{p_0}} = (L_1 \wgp{g}{\pml{p_0}})R_1 + (L_2 \wgp{g}{\pml{p_0}})R_2 
+ \cdots + (L_J \wgp{g}{\pml{p_0}})R_J ,
$$
and so
\begin{eqnarray} \label{EqDynSys}
M_J^k \wgp{g}{\pml{p_0}} &=& 
  (L_1 \wgp{g}{\pml{p_0}})R_1 + a_2^k (L_2 \wgp{g}{\pml{p_0}})R_2  + \cdots \\ 
 & &  \cdots + a_J^k (L_J \wgp{g}{\pml{p_0}})R_J.  \nonumber
\end{eqnarray}
We note that $L_{1 \cdot} = [1 \cdots 1]$, $\lambda_1 = 1$, and $R_{\cdot 1} = e_1$;
that the other eigenvalues $a_j^k \fto 0$ with $a_j^k > a_{j+1}^k$.   Thus as $k \fto \infty$
the terms on the righthand side decay to $0$ except for the first term, establishing the result.
\end{proof}

With Lemma~\ref{AsymLemma} and the initial values in $\pgap(\pml{13})$ 
in Table~\ref{G13Table}, we can calculate the asymptotic ratios
of the occurrences of the gaps $g = 6, 8, \ldots, 32$ to the gap $g=2$. 

\renewcommand\arraystretch{1.2}
\begin{table}
\begin{center}
\begin{tabular}{ll} \hline
\multicolumn{2}{|c|}{Values of $a_j^k$ at $p_k=999,999,999,989$ for $p_0 = 13$} \\ \hline
 &
$a_2^k = 0.10206751799779$ \\
 & $a_3^k = 0.01019996897567$ \\
$ a_j^k = \prod_{q=17}^{p_k} \frac{q-j-1}{q-2} $ \biggap & $a_4^k = 0.00099592269918$ \\
 & $a_5^k = 0.00009477093531$ \\
 & $a_6^k = 0.00000876214163 $ \\
 & $a_7^k = 0.00000078408120$ \\
 & $a_8^k =  0.00000006757562$ \\
 & $a_9^k =  0.00000000557284$ \\ \hline
\end{tabular}
\caption{\label{AjkTable} Calculated values of the eigenvalues $a_j^k$ up to $p_k \approx 10^{12}$.  
If we use initial conditions
from $\pgap(\pml{13})$, then $p_0 = 13$ and the products start with $p_1=17$.  }
\end{center}
\end{table}
\renewcommand\arraystretch{1}

From the calculated values in Table~\ref{AjkTable}, 
we see the decay of the $a_j^k$ toward $0$, but $a_2^k$ and $a_3^k$ 
are still making significant contributions when $p_k \approx 10^{12}$.

In Table~\ref{G13Table} we used $p_0=13$ for our initial conditions since the prime 
$p=13$ is the first prime 
for which the conditions of Corollary~\ref{CorGrowth} 
are satisifed for the next primorial $g=\pml{5} = 30$.  

\renewcommand\arraystretch{0.8}
\begin{table}
\begin{center}
\begin{tabular}{|r|rrrrrrrrr|} \hline
gap & \multicolumn{9}{c|}{ $n_{g,j}(13)$: driving terms of length $j$ in $\pgap(\pml{13})$ }\\ [1 ex]
$g$ & $j=1$ & $2$ & $3$ & $4$ & $5$ & $6$ & $7$ & $8$ & $9$ \\ \hline 
 $\lil 2, \; 4$ & $\lil 1485$ & & & & & & & & \\
 $\lil 6$ & $\lil 1690$ & $\lil 1280$ & & & & & & & \\
 $\lil 8$ & $\lil 394$ & $\lil 902$ & $\lil 189$ & & & & & & \\
 $\lil 10$ & $\lil 438$ & $\lil 1164$ & $\lil 378$ & & & & & & \\
 $\lil 12$ & $\lil 188$ & $\lil 1276$ & $\lil 1314$ & $\lil 192$ & & & & & \\
 $\lil 14$ & $\lil 58$ & $\lil 536$ & $\lil 900$ & $\lil 288$ & & & & & \\
 $\lil 16$ & $\lil 12$ & $\lil 252$ & $\lil 750$ & $\lil 436$ & $\lil 35$ & & & &  \\
 $\lil 18$ & $\lil 8$ & $\lil 256$ & $\lil 1224$ & $\lil 1272$ & $\lil 210$ & & & & \\
 $\lil 20$ & $\lil 0$ & $\lil 24$ & $\lil 348$ & $\lil 960$ & $\lil 600$ & $\lil 48$ & & & \\
 $\lil 22$ & $\lil 2$ & $\lil 48$ & $\lil 312$ & $\lil 784$ & $\lil 504$ & & & & \\
 $\lil 24$ & $\lil 0$ & $\lil 20$ & $\lil 258$ & $\lil 928$ & $\lil 1260$ & $\lil 504$ & & & \\
 $\lil 26$ & $\lil 0$ & $\lil 2$ & $\lil 40$ & $\lil 322$ & $\lil 724$ & $\lil 448$ & $\lil 84$ & & \\
 $\lil 28$ & $\lil 0$ & $\lil 0$ & $\lil 36$ & $\lil 344$ & $\lil 794$ & $\lil 528$ & $\lil 80$ & & \\
 $\lil 30$ & $\lil 0$ & $\lil 0$ & $\lil 10$ & $\lil 194$ & $\lil 1066$ & $\lil 1784$ & $\lil 816$ & $\lil 90$ & \\
 $\lil 32$ & $\lil 0$ & $\lil 0$ & $\lil 0$ & $\lil 12$ & $\lil 200$ & $\lil 558$ & $\lil 523$ & $\lil 172$ & $\lil 20$ \\ \hline
 \end{tabular}
\caption{ \label{G13Table} For small gaps $g$, 
this table lists the number of gaps and driving terms of length $j$
 that occur in the cycle of gaps $\pgap(\pml{13})$. We can use these as initial conditions
 for the population model in Equation~\ref{EqDynSys} of size $J \le 9$.}
 \end{center}
 \end{table}
\renewcommand\arraystretch{1}

Applying Lemma~\ref{AsymLemma} to the initial conditions for $p_0=13$ in Table~\ref{G13Table}, 
as $p_k \fto \infty$, the following ratios
describe the relative frequency of occurrence of these gaps in Eratosthenes sieve:
\begin{center}
\begin{tabular}{rl}\hline
ratio $w_{g,1}(\infty)$ : & gaps $g$ with this ratio \\ \hline
$1$ : & $2, \; 4, \; 8, \; 16, \; 32$  \\
$1.\overline{09}$ : & $26$ \\
$1.\bar{1}$ : & $22$ \\
$1.2$ : & $14, \; 28$ \\
$1.\bar{3}$ : & $10, \; 20$ \\
$2$ : & $ 6, \; 12, \; 18, \; 24$ \\
$ 2.\bar{6}$ : & $ 30$
\end{tabular}
\end{center}

This table begins to suggest that the ratios implied by Hardy and Littlewood's Conjecture B may
hold true in Eratosthenes sieve.

The ratios discussed in this paper give the exact values of the relative frequencies of various gaps and
constellations as compared to the number of gaps $2$ at each stage of Eratosthenes sieve.
For gaps between primes, if the closures are at all fair as the sieving process continues, then 
these ratios in stages of the sieve should
also be good indicators of the relative occurrence of these gaps and constellations  
among primes.

\subsection{Primorial $g=\pml{5}=30$.}
We also see that the primorial $g=\pml{5}=30$ eventually becomes more numerous than 
$g=\pml{3}=6$.  However, if we apply the expansion of the higher order terms
\begin{eqnarray}\label{EqEigSys}
\wgp{g}{\pml{p_k}} & = & M_J^k \wgp{g}{\pml{p_0}} \nonumber \\
& = &  (L_1 \wgp{g}{\pml{p_0}})R_1 + a_2^k (L_2 \wgp{g}{\pml{p_0}})R_2  + \cdots 
  + a_J^k (L_J \wgp{g}{\pml{p_0}})R_J 
 \end{eqnarray}
 to the initial conditions for $p_0=13$, using the $a_j^k$ as tabulated for 
 $\hat{p}=999,999,999,989$, we calculate that for $p_k \approx 10^{12}$
 $$ \wgp{6}{\pml{\hat{p}}} \approx 1.912 \biggap {\rm and} \biggap
\wgp{30}{\pml{\hat{p}}} \approx 1.579.$$
The convergence to the asymptotic values is very slow, due primarily to the slow decay of 
$a_2^k$.

For what $p$ will $\wgp{30}{\pml{p}} > \wgp{6}{\pml{p}}$?  That is, when will the gap $30$ be
more numerous in Eratosthenes sieve than the gap $6$? 
To estimate this, we examine the first coordinate across the terms in Equation~\ref{EqEigSys}.
The first coordinates of the $R$'s are $1$'s of alternating sign, and so we have
$$ \wgp{g}{\pml{p}} = (L_1 \wgp{g}{\pml{p_0}}) - a_2^k (L_2 \wgp{g}{\pml{p_0}})  + \cdots 
  + (-1)^{J+1} a_J^k (L_J \wgp{g}{\pml{p_0}}).$$
To estimate where $\wgp{30}{\pml{p}}-\wgp{6}{\pml{p}}>0$, we make the rough approximation
that for $p_k >> j$,
$$ a_j^k \approx (a_2^k)^{j-1}$$
and solve for the parameter $a_2^k$.  Using the data from $p_0 = 13$, we see that
$\wgp{30}{\pml{p}} > \wgp{6}{\pml{p}}$, that is, that the gaps $30$ will finally be more numerous
in Eratosthenes sieve than the gaps $6$, when $a_2^k < 0.06275$.

For $p_0=13$, when $p_k \approx 10^{12}$ the parameter $a_2^k \approx 0.1$, and when
$p_k \approx 10^{15}$ the parameter $a_2^k \approx 0.08$.  The decay of $a_2^k$ is so slow 
that there will still be fewer gaps $30$ than gaps $6$ in Eratosthenes sieve when $p \approx 10^{15}$.

\section{Polignac's conjecture and\\  Hardy \& Littlewood's Conjecture B}
At this point, here is what we know about the population of a gap $g$ through the cycles of gaps
$\pgap(\pml{p})$ in Eratosthenes sieve.  We need $p_0$ such that conditions of Corollary~\ref{CorGrowth}
hold, in particular the condition $g < 2 p_1$.  
Then we need $J$ such that no constellation of length $J+1$ has sum
equal to $g$.  This is the size of system we need to consider, to apply the dynamic system 
of Equation~(\ref{EqDynSys}) to $g$ and its driving terms.  For a given $g$, once we have $p_0$ and $J$, 
from $\pgap(\pml{p_0})$ we can obtain counts of driving terms for $g$
from length $1$ to $J$ to create the initial conditions $w_g(\pml{p_0})$, and we can
apply the model directly or through its eigenstructure, to obtain the exact populations 
of $g$ and its driving terms through the all further
stages of Eratosthenes sieve.

Our progress along this line of increasing $p_0$ and $J$ is complicated primarily by our
having to construct $\pgap(\pml{p_0})$.  This cycle of gaps contains $\phi(\pml{p_0})$ elements,
which grows unmanageably large.  If we have $\pgap(\pml{p_0})$, then for every gap $g < 2 p_1$ 
we can enumerate the driving terms of various 
lengths.  We take the maximum such length as $J$.

In this section we introduce an alternate way to obtain initial conditions for any gap $g$,
sufficient to apply Lemma~\ref{AsymLemma}.

As an analogue to Polignac's conjecture, we show that for any even number $2n$, the
gap $g=2n$ or its driving terms occur at some stage of Eratosthenes sieve, and we show
that although we can't apply the complete dynamic system, we do have enough
information to get the asymptotic result from Lemma~\ref{AsymLemma}.

{\em Polignac's Conjecture}:  For any even number $2n$, there are infinitely many prime pairs
$p_j$ and $p_{j+1}$ such the difference $p_{j+1}-p_j = 2n$.

In Theorem~\ref{PolThm} below we establish an analogue of Polignac's conjecture for 
Eratosthenes sieve, that for any number $2n$ the gap $g=2n$ occurs infinitely often
in Eratosthenes sieve, and the ratio of occurrences of this gap to the gap $2$ approaches
the ratio implied by Hardy \& Littlewood's Conjecture B:
$$ w_{g,1}(\infty) = \lim_{p \rightarrow \infty} \frac{N_g(\pml{p})}{N_2(\pml{p})}
= \prod_{q>2, \; q|g} \frac{q-1}{q-2}.$$

To obtain this result, we first consider $\Z \bmod Q$ and its cycle of gaps $\pgap(Q)$,
in which $Q$ is the product of the prime divisors of $2n$.
We then bring this back into Eratosthenes sieve by filling in the primes
missing from $Q$ to obtain a primorial $\pml{p}$.

Once we are working with $\pgap(\pml{p})$, the condition $g < 2 p_{k+1}$
may still prevent us from applying Corollary~\ref{CorGrowth}.  
However, we are able to show that we have enough information to 
apply Lemma~\ref{AsymLemma} under the construction we are using.

\subsection{General recursion on cycles of gaps}
We need to develop a more general form of the recursion on cycles of gaps, one
that applies to creating $\pgap(qN)$ from $\pgap(N)$ for any prime $q$ and
number $N$.  We also need a variant of Lemma~\ref{Lemma2p} that does
not require the condition $g < 2 p_{k+1}$.

Let $\pgap(N)$ denote the cycle of gaps among the generators in $\Z \bmod N$, with
the first gap being that between $1$ and the next generator.  There are
$\phi(N)$ gaps in $\pgap(N)$ that sum to $N$. In our work in the preceding sections, 
we focused on Eratosthenes sieve, in which $N=\pml{p}$, the primorials. 

There is a one-to-one correspondence between generators of $\Z \bmod N$ and the gaps in 
$\pgap(N)$.  Let
$$\pgap(N) = g_1 \; g_2 \; \ldots g_{\phi(N)}.$$
Then for $k < \phi(N)$, $g_k$ corresponds to the generator $\gamma = 1+\sum_{j=1}^{k} g_j$, and since
$\sum_{j=1}^{\phi(N)} = N$, the generator $1$ corresponds to $g_{\phi(N)}$.  Moreover, since
$1$ and $N-1$ are always generators, $g_{\phi(N)}=2$.  For any generator $\gamma$, $N-\gamma$ 
is also a generator, which implies that except for the final $2$, $\pgap(N)$ is symmetric.
As a convention, we write the cycles with the first gap being from $1$ to the next generator. 

We build $\pgap(N)$ for any $N$ by introducing 
one prime factor at a time.

\begin{lemma}\label{LemmaCycle}
Given $\pgap(N)$, for a prime $q$ we construct $\pgap(qN)$ as follows:
\begin{enumerate}
\item[a)] if $q \mid N$, then we concatenate $q$ copies of $N$,
$$ \pgap(qN) = \underbrace{\pgap(N) \cdots \pgap(N)}_{q \gap {\rm copies}}$$
\item[b)] if $q \not| N$, then we build $\pgap(qN)$ in three steps:
\begin{enumerate}
\item[R1] Concatenate $q$ copies of $\pgap(N)$;
\item[R2] Close at $q$;
\item[R3] Close as indicated by the element-wise product $q * \pgap(N)$.
\end{enumerate}
\end{enumerate}
\end{lemma}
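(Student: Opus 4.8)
The plan is to prove Lemma~\ref{LemmaCycle} directly from the Chinese Remainder Theorem and the characterization of $\pgap(N)$ via generators of $\Z \bmod N$, following exactly the template already used in the proof of Lemma~\ref{RecursLemma} and Theorem~\ref{DelThm} but stripped of the primorial-specific hypotheses. The only new ingredients are (i) the trivial case $q \mid N$, and (ii) keeping track of the fact that when $q \not\mid N$ the ``first closure'' in R2 is not automatically the next prime but simply the closure at the integer $q$ itself.

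First I would set up the correspondence. Recall that the generators of $\Z \bmod N$ are exactly the integers in $[1,N]$ coprime to $N$, and the gap $g_k$ in $\pgap(N)$ corresponds to the generator $\gamma_k = 1 + \sum_{j=1}^k g_j$; this is the statement established just before the lemma. For case (a), $q \mid N$: an integer $m \in [1, qN]$ is coprime to $qN$ if and only if it is coprime to $N$ (since $q$ already divides $N$, adding the factor $q$ introduces no new prime). Hence the generators of $\Z \bmod qN$ are precisely $\gamma_k + iN$ for $i = 0, \ldots, q-1$ and $k = 1, \ldots, \phi(N)$, laid out in increasing order. The gaps between consecutive such integers are exactly the gaps of $\pgap(N)$ repeated, with the wrap-around gap from $\gamma_{\phi(N)} = N-1 \equiv -1$ back to $1$ across each copy boundary being the same final gap $g_{\phi(N)} = 2$ that already closes $\pgap(N)$; so $\pgap(qN)$ is literally $q$ concatenated copies of $\pgap(N)$, which is assertion (a).

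Next, case (b), $q \not\mid N$. Here I would argue that an integer $m \in [1, qN]$ is coprime to $qN$ iff it is coprime to $N$ \emph{and} not divisible by $q$. Step R1 concatenates $q$ copies of $\pgap(N)$, which enumerates exactly the integers in $[1, qN]$ coprime to $N$; their count is $q\,\phi(N)$. We must now delete those that are multiples of $q$. The multiples of $q$ in $[1,qN]$ that are also coprime to $N$ are $q \cdot 1$ together with $q\cdot\gamma_k$ for $k=1,\ldots,\phi(N)-1$ (using $q\gamma_{\phi(N)} = q(N-1) \equiv -q$, which also gets wrapped), and by CRT each residue class mod $q$ occurs exactly once among the $q$ copies of any fixed generator $\gamma_k$ of $\Z \bmod N$ — in particular the class that makes $\gamma_k + iN \equiv 0 \bmod q$ occurs for exactly one $i \in \{0,\ldots,q-1\}$. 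Deleting a candidate means adding together its two adjacent gaps (``closing''); the deletion at $m = q$ is the single closure in R2, and the deletions at $m = q\gamma_k$ are precisely the closures in R3, whose positions are spaced by the successive differences $q\gamma_{k+1} - q\gamma_k = q\,g_k$, i.e.\ by the element-wise product $q * \pgap(N)$, exactly as in step R3 of Lemma~\ref{RecursLemma}. This exhausts all multiples of $q$ coprime to $N$ that lie in the fundamental domain, so after all these closures we are left with exactly the generators of $\Z \bmod qN$ in order, and their consecutive gaps form $\pgap(qN)$.

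The main obstacle — really the only subtlety — is the bookkeeping at the boundaries of the concatenated copies and the wrap-around: I need to make sure the closure ``at $q$'' in R2 and the closures in R3 are consistently indexed when one of them straddles the seam between two copies of $\pgap(N)$ or wraps past the end of the last copy back to the first gap (this is the analogue of the wrap-around remark following the statement of Lemma~\ref{RecursLemma}, and of Remark~\ref{EasyRmk}(3)--(4)). Since $\pgap(N)$ ends in $2$ and its concatenation is cyclic of total length $N$, the argument is identical in spirit to the primorial case; I would simply remark that the indices are read cyclically and that $q\gamma_{\phi(N)} \equiv -q \bmod qN$ supplies the final wrap-around closure. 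Everything else is a direct transcription of the CRT argument in the proof of Theorem~\ref{DelThm}, with $\pml{p_k}$ replaced by $N$ and $p_{k+1}$ by $q$, so I would keep the write-up short and point back to those proofs rather than repeat the residue-counting in full.
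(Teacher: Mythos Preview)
Your proposal is correct and follows essentially the same approach as the paper: both arguments characterize the generators of $\Z \bmod qN$ via $\gcd$, dispose of case (a) by noting that $q\mid N$ introduces no new prime factor, and handle case (b) by listing the candidates $\gamma + jN$, then removing the multiples $q\gamma_k$ whose successive spacings are $q g_k$. Your write-up is slightly more explicit about the CRT uniqueness of the copy in which each closure falls (which the paper defers to the following Theorem~\ref{ThmqN}) and about the wrap-around bookkeeping, but the underlying argument is the same.
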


\begin{proof}
A number $\gamma$ in $\Z \bmod N$ is a generator iff $\gcd(\gamma,N)=1$. 
\begin{itemize}
\item[a)] 
Assume $q|N$.  Since $\gcd(\gamma,N)=1$, we know that $q \not| \gamma$.

For $j=0,1,\ldots,q-1$, we have 
$$\gcd(\gamma+jN, qN)= \gcd(\gamma,qN) = \gcd(\gamma,N)=1.$$
Thus $\gcd(\gamma,N)=1$ iff $\gcd(\gamma+jN,qN)=1$, and so the generators of $\Z \bmod qN$ have
the form $\gamma+jN$, and the gaps take the indicated form.
\item[b)] 
If $q \not| N$ then we first create a set of candidate generators for $\Z \bmod qN$, by
considering the set 
$$\set{\gamma+jN \st \gcd(\gamma,N)=1, \gap j=0,\ldots, q-1}.$$
For gaps, this is the equivalent of step R1, concatenating $q$ copies of $\pgap(N)$.
The only prime divisor we have not accounted for is $q$; if $\gcd(\gamma+jN,q)=1$, then this
candidate $\gamma+jN$ is a generator of $\Z \bmod qN$.  So we have to remove $q$ and its
multiples from among the candidates.

We first close the gaps at $q$ itself.  We index the gaps in the $q$ concatenated copies of
$\pgap(N)$:
$$ g_1 g_2 \ldots g_{\phi(N)} \ldots g_{q\cdot \phi(N)}.$$
Recalling that the first gap $g_1$ is the gap between the generator $1$ and the next smallest
generator in $\Z \bmod N$, the candidate generators are the 
running totals $\gamma_j = 1+\sum_{i=1}^{j-1} g_i$.  We take the $j$ for which $\gamma_j = q$, 
and removing $q$ from the list of candidate generators corresponds to replacing the gaps
$g_{j-1}$ and $g_j$ with the sum $g_{j-1}+g_j$.  This completes step R2 in the construction.

To remove the remaining multiples of $q$ from among the candidate generators, we note
that any multiples of $q$ that share a prime factor with $N$ have already been removed.
We need only consider multiples of $q$ that are relatively prime to $N$; that is, we only need
to remove $q\gamma_j$ for each generator $\gamma_j$ of $\Z \bmod N$ by closing the
corresponding gaps.

We can perform these closures by working directly with the cycle of gaps $\pgap(N)$.  Since
$q\gamma_{i+1}-q\gamma_{i} = qg_{i}$, we can go from one closure to the next by tallying the
running sum from the current closure until that running sum equals $qg_{i}$.
Technically, we create a series of indices beginning with $i_0=j$ such that $\gamma_j=q$,
and thereafter $i_k=j$ for which $\gamma_j-\gamma_{i_{k-1}}=q\cdot g_k$.  To cover the cycle
of gaps under construction, which consists initially of $q$ copies of $\pgap(N)$, $k$ runs
only from $0$ to $\phi(N)$.  We note that the last interval wraps around the end of the cycle
and back to $i_0$:  $i_{\phi(N)}=i_0$.
\end{itemize}
\end{proof}

\begin{theorem}\label{ThmqN}
In step R3 of Lemma~\ref{LemmaCycle}, each possible closure in $\pgap(N)$ occurs exactly once
in constructing $\pgap(qN)$.
\end{theorem}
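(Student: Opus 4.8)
The plan is to mirror the proof of Theorem~\ref{DelThm}, which handled the primorial case, and adapt it to the more general modulus $N$ via the Chinese Remainder Theorem applied factor by factor. First I would set up the correspondence between the gaps in the $q$ concatenated copies of $\pgap(N)$ (the candidate generators after step R1) and residue data. Each candidate generator $\gamma + jN$ with $\gcd(\gamma,N)=1$ and $j=0,\ldots,q-1$ is determined by the residue of $\gamma$ modulo each prime power dividing $N$ together with the residue $j' \equiv \gamma+jN \bmod q$. As $j$ ranges over $0,\ldots,q-1$ and $q\nmid N$, the value $\gamma+jN$ runs through all residues modulo $q$ exactly once (since $N$ is invertible mod $q$), so among the $q$ copies of a fixed gap $g_i$ in $\pgap(N)$ there is exactly one copy whose left endpoint $\gamma_i$ satisfies $\gamma_i \equiv 0 \bmod q$, and the remaining $q-1$ copies survive the step R2/R3 pruning as generators.

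Next I would identify what a ``closure in $\pgap(N)$'' means at the level of generators: closing the two gaps adjacent to an entry $\gamma_i$ in $\pgap(N)$ corresponds to deleting the value $q\gamma_i$ from the candidate list in $\Z\bmod qN$. By Lemma~\ref{LemmaCycle}(b), step R3 deletes exactly the multiples $q\gamma_i$ for $i=1,\ldots,\phi(N)$ — one deletion for each generator $\gamma_i$ of $\Z\bmod N$, with the index series $i_0,i_1,\ldots,i_{\phi(N)}=i_0$ wrapping once around the concatenated cycle. So step R3 performs precisely $\phi(N)$ closures, one keyed to each generator of $\Z\bmod N$, hence one keyed to each gap (equivalently each possible closure) in $\pgap(N)$. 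The content of the theorem is then that these $\phi(N)$ closures land, one apiece, on the $\phi(N)$ distinct closure-sites of $\pgap(N)$ rather than doubling up. This follows because the closure deleting $q\gamma_i$ sits between the surviving neighbors of $q\gamma_i$ in $\Z\bmod qN$; the surviving copy of the gap $g_{i-1}g_i$ adjacent-pair pattern at $\gamma_i$ within the $q$ copies is exactly the one indexed by $i_k$ with $k\equiv i$, and distinctness of the $\gamma_i$ modulo $N$ forces distinctness of the sites. I would phrase this as: the map ``closure in step R3'' $\to$ ``the gap $g_i$ of $\pgap(N)$ it re-creates/acts within'' is a bijection onto $\{g_1,\ldots,g_{\phi(N)}\}$, which is the claim.

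I expect the main obstacle to be the bookkeeping that distinguishes step R2 (the single closure at $q$ itself) from step R3, and making rigorous the claim that each R3 closure acts \emph{within a single copy} of the relevant two-gap window of $\pgap(N)$ — i.e.\ that no R3 closure straddles the join between two concatenated copies in a way that would make the ``$i$'' ambiguous. The condition $g < 2p_{k+1}$ that protected Lemma~\ref{Lemma2p} is dropped here, so I cannot appeal to spacing of closures; instead the argument must be purely residue-theoretic, using that the $q$ copies of $\pgap(N)$ are indexed by residues mod $q$ and that deletion of $q\gamma_i$ is pinned to the copy where the running total hits a multiple of $q$. Once that indexing is stated cleanly, the CRT argument — every candidate generator corresponds to a unique tuple of residues $(\bmod\ \text{prime powers of }N,\ \bmod\ q)$, and exactly one choice of copy realizes the residue $0\bmod q$ at any prescribed $\gamma_i$ — closes the proof exactly as in Theorem~\ref{DelThm}, with $p_{k+1}$ replaced by $q$ and $\pml{p_k}$ by $N$.
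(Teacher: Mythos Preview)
Your approach is essentially the paper's: the core is the observation that since $q\nmid N$, among the $q$ copies $\gamma+jN$ of a fixed generator $\gamma$ exactly one is $\equiv 0\bmod q$, so the closure at that site occurs once and only once. The paper's proof is just this one-line residue argument --- your prime-power CRT decomposition of $N$ and the worry about closures ``straddling'' copies are unnecessary, since a closure is pinned to a single generator position (not a constellation), and the only fact needed is that $N$ is a unit modulo $q$.
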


\begin{proof}
Consider each gap $g$ in $\pgap(N)$.  Since $q \not| N$, $N \bmod q \neq 0$.
Under step R1 of the construction, $g$ has $q$ images.  Let the generator corresponding
to $g$ be $\gamma$.  Then the generators corresponding to the images of $g$ under step
R1 is the set:
$$\left\{ \gamma+jN \st j=0,\ldots,q-1\right\}.$$
Since $N \bmod q \neq 0$, there is exactly one $j$ for which $(\gamma+jN) \bmod q = 0$.
For this gap $g$, a closure in R2 and R3 occurs once and only once, at the image corresponding
to the indicated value of $j$.
\end{proof}

\begin{figure}[t]
\centering
\includegraphics[width=5in]{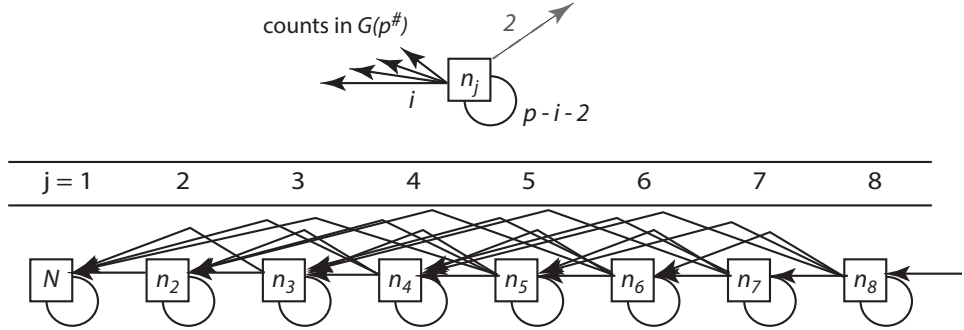}
\caption{\label{GenSystemFig} In the general dynamic system, when the 
condition $g < 2 p_{k+1}$ may not be satisfied, the interior closures may not
occur in distinct copies of the constellation.  However, the two exterior closures
still remove two copies from being driving terms for $g$.  The other $n_j-2$ copies
remain as driving terms, but we cannot specify their lengths. }
\end{figure}

\begin{corollary}\label{qNCor}
Let $g$ be a gap.  If for the prime $q$, $q \not| g$, then 
$$ \sum \Rat{j}{g}(qN) = \sum \Rat{j}{g}(N).$$
\end{corollary}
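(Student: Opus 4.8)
The plan is to collapse the length-stratified driving-term counts into one quantity and then read the claim off the Chinese Remainder Theorem, which is exactly what underlies Theorem~\ref{ThmqN}. Write $T(g,M)=\sum_j n_{g,j}(M)$ for the total number of constellations of sum $g$ occurring in $\pgap(M)$; since a driving term of length $1$ is the gap $g$ itself we have $N_g(M)=n_{g,1}(M)$ and $N_2(M)=T(2,M)$, so $\sum_j \Rat{j}{g}(M)=T(g,M)/N_2(M)$ and the corollary is just $T(g,qN)/N_2(qN)=T(g,N)/N_2(N)$. The first step is to reinterpret $T(g,M)$: a constellation of sum $g$ is a run of consecutive gaps, hence is pinned down by its two endpoints, which are generators of $\Z \bmod M$ at distance $g$; conversely each pair $(\gamma,\gamma+g)$ of generators of $\Z \bmod M$ bounds exactly one such constellation, the generators strictly between them filling in and fixing its length (the length-one case being the one with no generator in between). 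Counting $\gamma$ over one period,
$$ T(g,M) \;=\; \#\{\, \gamma \bmod M \st \gcd(\gamma,M)=\gcd(\gamma+g,M)=1 \,\}. $$

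Next I would split on whether $q\mid N$. If $q\mid N$, Lemma~\ref{LemmaCycle}(a) gives $\pgap(qN)$ as $q$ concatenated copies of $\pgap(N)$, so every window count is multiplied by $q$: $T(g,qN)=q\,T(g,N)$ and $N_2(qN)=q\,N_2(N)$, and the ratio is unchanged (this case does not even use $q\nmid g$). If $q\nmid N$, then $\gcd(q,N)=1$ and the Chinese Remainder Theorem identifies $\Z\bmod qN$ with $(\Z\bmod q)\times(\Z\bmod N)$; under this identification the condition ``$\gamma$ and $\gamma+g$ both coprime to $qN$'' factors as ``$\gamma\not\equiv 0$ and $\gamma+g\not\equiv 0\pmod q$'' together with ``$\gcd(\gamma,N)=\gcd(\gamma+g,N)=1$''. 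This is precisely the bookkeeping in Theorem~\ref{ThmqN} --- among the $q$ images $\gamma+jN$ of a generator of $\Z\bmod N$ exactly one is killed by removing multiples of $q$ --- so
$$ T(g,qN) \;=\; c_q(g)\cdot T(g,N), \qquad c_q(g) := \#\{\, x \bmod q \st x\not\equiv 0 \ {\rm and}\ x\not\equiv -g \pmod q \,\}. $$

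Finally I would evaluate the local factor $c_q$. Since $q\nmid g$, the residues $0$ and $-g$ are distinct modulo $q$, so $c_q(g)=q-2$; and since the gap $g$ is even --- in all the cycles $\pgap(N)$ we build, $N$ is even, so every gap is even --- the hypothesis $q\nmid g$ forces $q$ odd, whence $q\nmid 2$ and $c_q(2)=q-2$ as well. Therefore $T(g,qN)/N_2(qN)=(q-2)T(g,N)/((q-2)N_2(N))=T(g,N)/N_2(N)$, which is the asserted identity. The only step needing real care is the reinterpretation in the first paragraph --- that a constellation of sum $g$ in $\pgap(M)$ is the same data as an ordered pair of generators of $\Z\bmod M$ at distance $g$, wrap-around included --- since once that bijection is clean, the hypothesis $q\nmid g$ plugs transparently into the $q$-local factor of the CRT decomposition. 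When $g<2q$ one could alternatively sum the $qN$-form of Lemma~\ref{Lemma2p} over $j$ and watch the telescoping $(q-j-1)+(j-1)=q-2$, but that argument needs the distinct-copies hypothesis $g<2q$, which the generator-pair count avoids.
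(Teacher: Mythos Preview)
Your proof is correct. The underlying content is the same as the paper's --- both arguments rest on the Chinese Remainder Theorem (Theorem~\ref{ThmqN}) and both produce the common factor $q-2$ for numerator and denominator --- but you take a cleaner route. The paper tracks each driving term $s$ individually through the construction of $\pgap(qN)$, distinguishes interior from exterior closures, and argues that because $q\nmid g$ the two exterior closures land in distinct copies of $s$, so $q-2$ of the $q$ copies survive as driving terms (of possibly shorter length). You instead collapse the length stratification up front via the generator-pair reinterpretation $T(g,M)=\#\{\gamma \bmod M : \gcd(\gamma,M)=\gcd(\gamma+g,M)=1\}$, which is transparently multiplicative under CRT, and then the local factor $c_q(g)=q-2$ drops out immediately from $q\nmid g$. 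This bypasses the interior/exterior-closure bookkeeping entirely and makes the multiplicativity visible at a glance; the paper's version has the compensating virtue of staying in the constellation-and-closure language used elsewhere. Your observation that $g$ even forces $q$ odd, hence $c_q(2)=q-2$ as well, is a point the paper's proof uses silently when it asserts $n_{2,1}(qN)=(q-2)\,n_{2,1}(N)$.
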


\begin{proof}
Consider a driving term $s$ for $g$, of length $j$ in $\pgap(N)$.  
In constructing $\pgap(qN)$, we initially create $q$ copies of $s$.

If $q | N$, then the construction is complete.  For each driving term for $g$
in $\pgap(N)$ we have $q$ copies, and so
$ n_{g,j}(qN) = q  \cdot n_{g,j}(N).$  However, we also have $q$ copies of
every gap $2$ in $\pgap(N)$, 
 $n_{2,1}(qN) = q \cdot n_{2,1}(N)$.
Thus $\Rat{j}{g}(qN) = \Rat{j}{g}(N)$, and we have equality for each length $j$, and 
so the result about the sum is immediate.

If $q \not| N$, then in step R1 we create $q$ copies of $s$.
In steps R2 and R3, each of the possible closures in $s$ occurs once, distributed among the
$q$ copies of $s$.  The $j-1$ closures interior to $s$ change the lengths of some of the driving
terms but don't change the sum, and the result
is still a driving term for $g$.  Only the two exterior closures, one at each end of $s$, change
the sum and thereby remove the copy from being a driving term for $g$.  Since $q \not| g$, 
these two exterior closures occur in separate copies of $s$.  See Figure~\ref{GenSystemFig}.

If the condition $g < 2p_{k+1}$ applies, then each of the closures occur in a separate copy of 
$s$, and we can use the full dynamic system of Corollary~\ref{CorGrowth}.
For the current result we do not know that the closures necessarily occur in distinct copies of 
$s$, and so we can't be certain of the lengths of the resulting constellations.

However, we do know that of the $q$ copies of $s$, two are eliminated as driving terms and
$q-2$ remain as driving terms of various lengths.  
$$\sum_j n_{g,j}(qN) = (q-2) \sum_j n_{g,j}(N).$$
Since $n_{2,1}(qN) = (q-2)n_{2,1}(N)$, the ratios are preserved
$$\sum_j \Rat{j}{g}(qN) = \sum_j \Rat{j}{g}(N).$$
\end{proof}

By combining the preceding Corollary~\ref{qNCor} with Lemma~\ref{AsymLemma},
we immediately obtain the following result, that for any gap $g$, if we look at its largest prime
factor $\bar{q}$, then we can calculate the asymptotic ratios from $\pgap(\pml{\bar{q}})$.

\begin{corollary}\label{InfCor}
Let $g=2n$ be a gap, and let $\bar{q}$ be the largest prime factor of $g$.  Then
$$ \Rat{1}{g}(\infty) = \sum \Rat{j}{g}(\pml{\bar{q}}).$$
\end{corollary}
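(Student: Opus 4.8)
The plan is to bootstrap from Lemma~\ref{AsymLemma}, which evaluates $\Rat{1}{g}(\infty)$ as $\sum_j \Rat{j}{g}(\pml{p_0})$ but only once we sit at a primorial $\pml{p_0}$ satisfying the hypothesis $g < 2p_1$, and then to slide the base primorial down to $\pml{\bar q}$ using Corollary~\ref{qNCor}, which asserts that $\sum_j \Rat{j}{g}$ is unchanged under multiplying the modulus by any prime that does not divide $g$. The role of $\bar q$, the largest prime factor of $g$, is exactly that \emph{every} prime exceeding $\bar q$ is coprime to $g$, so moving between $\pml{\bar q}$ and any larger primorial only ever adjoins primes that are harmless for the quantity $\sum_j \Rat{j}{g}$.

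Concretely, I would proceed in four steps. First, choose a prime $p_0 > \bar q$ large enough that $g < 2 p_1$, with $p_1$ the prime immediately following $p_0$; this is possible since the inequality holds for all sufficiently large $p_0$ and we are free to take $p_0$ beyond $\bar q$. Second, note that $\pml{p_0}$ is built from $\pml{\bar q}$ by adjoining, one prime at a time, each prime $q$ with $\bar q < q \le p_0$, and each such $q$ satisfies $q \nmid g$ because $\bar q$ is the largest prime divisor of $g$. Third, apply Corollary~\ref{qNCor} once for each of these primes to obtain
$$ \sum_j \Rat{j}{g}(\pml{\bar q}) = \cdots = \sum_j \Rat{j}{g}(\pml{p_0}). $$
Fourth, apply Lemma~\ref{AsymLemma} at the prime $p_0$, whose hypothesis $g < 2 p_1$ was arranged in the first step, to get $\Rat{1}{g}(\infty) = \sum_j \Rat{j}{g}(\pml{p_0})$. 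Chaining the last two displays delivers $\Rat{1}{g}(\infty) = \sum_j \Rat{j}{g}(\pml{\bar q})$.

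I do not expect a genuine obstacle: the substantive content is already carried by Corollary~\ref{qNCor} (which rests on Theorem~\ref{ThmqN}) and by Lemma~\ref{AsymLemma}, and this corollary is just their composition. The two things I would be careful about are bookkeeping. One is to make sure the $p_0$ required by Lemma~\ref{AsymLemma} really can be taken above $\bar q$, so that the prime-by-prime passage in the second step never reintroduces a factor of $g$ --- which is immediate once one observes that enlarging $p_0$ only helps the hypothesis $g < 2 p_1$. The other is the degenerate case $\bar q = 2$ (that is, $g$ a power of $2$), where $\pgap(\pml{2})$ is the one-term cycle $2$; there I would check directly that reading driving terms cyclically still yields $\sum_j \Rat{j}{g}(\pml{2}) = 1$, consistent with the empty product $\prod_{q > 2,\; q \mid g} \frac{q-1}{q-2} = 1$ produced by Lemma~\ref{AsymLemma}. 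With those points settled, the stated equality follows at once.
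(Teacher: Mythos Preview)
Your proposal is correct and follows essentially the same route as the paper: use Corollary~\ref{qNCor} to show that $\sum_j \Rat{j}{g}(\pml{p})$ is constant for all primes $p \ge \bar q$ (since every such prime beyond $\bar q$ is coprime to $g$), advance to a $p_0$ large enough that $g < 2p_1$, and then invoke Lemma~\ref{AsymLemma}. Your write-up is somewhat more explicit about the bookkeeping, and your remark on the degenerate case $\bar q = 2$ is a nice sanity check that the paper omits, but the argument is the same.
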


\begin{proof}
For all primes $p > \bar{q}$, by Corollary~\ref{qNCor}
$$ \sum \Rat{j}{g}(\pml{p}) = \sum \Rat{j}{g}(\pml{\bar{q}}),$$
so once we reach $\pgap(\pml{\bar{q}})$, we continue through additional stages of the sieve
if necessary until the condition $g < 2p_1$ is satisfied, but the ratios remain unchanged during
this formality.  So the result from Lemma~\ref{AsymLemma} can be obtained from the
ratios determined in $\pgap(\pml{\bar{q}})$.
\end{proof}

\subsection{Polignac's conjecture for Eratosthenes sieve}
We establish an equivalent of Polignac's conjecture for Eratosthenes sieve.

\begin{theorem}\label{PolThm}
For every $n>0$, the gap $g=2n$ occurs infinitely often in Eratosthenes sieve, and the
ratio of the number of occurrences of $g=2n$ to the number of $2$'s converges asymptotically
to 
$$ \Rat{1}{2n}(\infty) = \prod_{q>2, \; q|n} \frac{q-1}{q-2}.
$$
\end{theorem}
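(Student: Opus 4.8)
The plan is to deduce the statement from Corollary~\ref{InfCor}: if $\bar q$ denotes the largest prime factor of $g=2n$, then $w_{2n,1}(\infty)=\sum_j w_{2n,j}(\pml{\bar q})$, so the whole theorem reduces to a single count carried out in $\pgap(\pml{\bar q})$. (When $n=1$ the claim is immediate, since $w_{2,1}\equiv 1$ and the displayed product is empty.) Concretely, I would compute the total number $\sum_j n_{2n,j}(\pml{\bar q})$ of driving terms of all lengths whose sum is $2n$ in one fundamental period of $\pgap(\pml{\bar q})$, and then divide by $N_2(\pml{\bar q})=\prod_{q=3}^{\bar q}(q-2)$ from the Twin Generators result.

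The key step is to re-express that total through the generators of $\Z\bmod\pml{\bar q}$. Under the one-to-one correspondence of Equation~(\ref{EqGen}) between gaps of $\pgap(\pml{\bar q})$ and generators of $\Z\bmod\pml{\bar q}$, a constellation of sum $2n$ is exactly a pair of generators $\gamma$ and $\gamma+2n$, with the generators strictly between them (if any) furnishing the interior gaps; since $2n>0$ the point $\gamma+2n$ genuinely follows $\gamma$, and whether $\gamma+2n$ is a generator depends only on its residue modulo $\pml{\bar q}$. Hence
$$\sum_j n_{2n,j}(\pml{\bar q})=\#\{\gamma \bmod \pml{\bar q}:\gcd(\gamma,\pml{\bar q})=1 \text{ and } \gcd(\gamma+2n,\pml{\bar q})=1\}.$$
I would then evaluate the right-hand side by the Chinese Remainder Theorem: for each prime $q\le\bar q$ the admissible residues are those with $\gamma\not\equiv 0$ and $\gamma\not\equiv -2n \pmod q$. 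If $q\mid 2n$ these two forbidden classes coincide, leaving $q-1$ choices; if $q\nmid 2n$ they are distinct, leaving $q-2$ (and $q=2$ always lands in the first case, since $2n$ is even, contributing the harmless factor $1$). Dividing by $N_2(\pml{\bar q})=\prod_{q=3}^{\bar q}(q-2)$, the factors with $q\nmid 2n$ cancel, and since every odd prime factor of $2n$ is at most $\bar q$, what survives is $\prod_{q>2,\,q\mid 2n}\frac{q-1}{q-2}=\prod_{q>2,\,q\mid n}\frac{q-1}{q-2}$, the asserted asymptotic ratio.

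For the ``occurs infinitely often'' half, I would note that each factor $\frac{q-1}{q-2}$ exceeds $1$, so $w_{2n,1}(\infty)\ge 1>0$; since $N_2(\pml p)=\prod_{q=3}^{p}(q-2)\to\infty$ and $N_{2n}(\pml p)=w_{2n,1}(\pml p)\cdot N_2(\pml p)$ with $w_{2n,1}(\pml p)\to w_{2n,1}(\infty)>0$ by Lemma~\ref{AsymLemma}, it follows that $N_{2n}(\pml p)\to\infty$; in particular the gap $2n$ appears in $\pgap(\pml p)$ for all sufficiently large $p$.

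The step I expect to demand the most care is the correspondence ``constellation of sum $2n$ $\leftrightarrow$ pair of generators at distance $2n$'', counted once per fundamental period. One must check it is unaffected by the possibility that $2n$ exceeds $\pml{\bar q}$ — so a run realizing it may wrap around the cycle, and may even be longer than one period — and that overlapping occurrences of driving terms, together with the interior closures described in Corollary~\ref{qNCor}, are all accounted for correctly. Once that bookkeeping is pinned down, the remainder is a routine Chinese Remainder Theorem computation resting entirely on results already established: Corollaries~\ref{InfCor} and \ref{qNCor}, Lemma~\ref{AsymLemma}, and the Twin Generators count.
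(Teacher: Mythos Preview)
Your argument is correct and reaches the same formula, but the route differs from the paper's. The paper proves an auxiliary Lemma~\ref{QLem}: it first works in $\pgap(Q)$, where $Q$ is the squarefree kernel of $2n$, observes that there every one of the $\phi(Q)$ generators begins a driving term of sum $2n$ (by winding $2n/Q$ times around the short cycle), and then inserts the primes $p<\bar q$ with $p\nmid Q$ one at a time, each contributing a factor $p-2$ to the total count via Corollary~\ref{qNCor}; dividing by $N_2(\pml{\bar q})$ and invoking Corollary~\ref{InfCor} finishes. You instead stay in $\pgap(\pml{\bar q})$ throughout, identify driving terms of sum $2n$ with generators $\gamma$ for which $\gamma+2n$ is again a generator, and count those directly by the Chinese Remainder Theorem. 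Your approach is more streamlined and bypasses Lemma~\ref{QLem} entirely; the paper's detour through $\pgap(Q)$ has the compensating virtue that the wrap-around issue you flag as delicate becomes trivial there, since in $\pgap(Q)$ \emph{every} starting point yields a driving term and the subsequent insertions of primes not dividing $2n$ are handled uniformly by Corollary~\ref{qNCor}. Your explicit verification that $w_{2n,1}(\infty)\ge 1$ forces $N_{2n}(\pml p)\to\infty$ is a point the paper leaves implicit.
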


We establish this result in two steps.  
First we find a stage of Eratosthenes sieve in which
the gap $g=2n$ has driving terms.  
Once we can enumerate the driving terms for $g$ 
in this initial stage of Eratosthenes sieve,
we can establish the asymptotic ratio of gaps $g=2n$ to the gaps $g=2$ as the sieve continues.

\begin{lemma}\label{QLem}
Let $g=2n$ be given.  Let $Q$ be the product of the primes dividing $2n$, including $2$.
$$Q = \prod_{q | 2n} q.$$
Finally, let $\bar{q}$ be the largest prime factor in $Q$.

Then in $\pgap(\pml{\bar{q}})$ the gap $g$ has driving terms, the total number 
of which satisfies
$$ \sum_j n_{g,j}(\pml{\bar{q}}) = \phi(Q) \cdot \prod_{p < \bar{q}, \; p\; \nmid \; Q} (p-2).$$
\end{lemma}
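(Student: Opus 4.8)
The plan is to compute the total number of driving terms of $g=2n$ first in the small cycle $\pgap(Q)$, and then to transport that count up to $\pgap(\pml{\bar{q}})$ by inserting, one prime at a time, each prime $p<\bar{q}$ that does not divide $Q$. The second step is pure bookkeeping with Theorem~\ref{ThmqN} and Corollary~\ref{qNCor}; the real content is the count in $\pgap(Q)$, and the lever there is that $Q$ is the radical of $2n$, so $Q\mid 2n=g$.

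For the count in $\pgap(Q)$, I would argue as follows. A driving term for $g$ in $\pgap(Q)$ is a run of consecutive gaps, possibly wrapping around the cycle, whose sum is $g$. Suppose such a run begins just after a generator $\alpha$ and ends at a generator $\beta$; then $\beta-\alpha\equiv g\equiv 0\pmod Q$. Since $\alpha$ and $\beta$ reduce to residues in $\{1,\dots,Q-1\}$ coprime to $Q$, this forces $\beta$ and $\alpha$ to be the same generator. A run from a generator back to itself has total sum equal to a positive multiple of $Q$ — it is an integral number of full revolutions of $\pgap(Q)$ — and requiring that sum to be $g$ pins the number of revolutions to $g/Q$. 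Hence each of the $\phi(Q)$ generators of $\Z\bmod Q$ is the starting point of exactly one driving term for $g$, namely $g/Q$ consecutive copies of the whole cycle, and there are no others. This gives $\sum_j n_{g,j}(Q)=\phi(Q)$, which is positive, so $g$ does have driving terms.

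To reach $\pgap(\pml{\bar{q}})$ I would insert the primes $p$ with $p<\bar{q}$ and $p\nmid Q$ in increasing order. At each stage let $N$ denote the running modulus (initially $Q$). The next prime $p$ satisfies $p\nmid N$, since $p\nmid Q$ and $p$ has not yet been inserted, and $p\nmid g$, since $p\nmid 2n$ (the primes of $Q$ being exactly the prime divisors of $2n$). So Lemma~\ref{LemmaCycle}(b) applies to build $\pgap(pN)$ from $\pgap(N)$, and by Theorem~\ref{ThmqN} each closure in $\pgap(N)$ is realized exactly once. Exactly as in the proof of Corollary~\ref{qNCor}, among the $p$ positional copies of any driving term for $g$ the two exterior closures fall in two distinct copies — distinct precisely because $p\nmid g$ — destroying those two, while the remaining $p-2$ copies survive as driving terms for $g$ (interior closures preserve the sum). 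Thus the total driving-term count is multiplied by $p-2$ at each insertion. After all such primes are inserted, $N=Q\cdot\prod_{p<\bar{q},\,p\nmid Q}p=\pml{\bar{q}}$, and therefore
$$\sum_j n_{g,j}(\pml{\bar{q}}) \;=\; \phi(Q)\cdot\prod_{p<\bar{q},\;p\nmid Q}(p-2),$$
as claimed.

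The step I expect to require the most care is the count in $\pgap(Q)$: one must be comfortable with ``constellations'' that wind several times around a short cycle when $g>Q$, and confirm that the notion of a driving term — and the per-copy accounting of closures underlying Theorem~\ref{ThmqN} and Corollary~\ref{qNCor} — remains valid in that regime, so that a closure attached to one copy of a generator is never conflated with a closure at another copy. Once that regime is accepted, the argument reduces to the revolution-counting identity above together with the telescoping product of the $(p-2)$ factors supplied by Corollary~\ref{qNCor}.
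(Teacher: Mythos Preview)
Your proposal is correct and follows essentially the same route as the paper: count the driving terms for $g$ in $\pgap(Q)$ as $\phi(Q)$, then insert the missing primes $p<\bar q$ with $p\nmid Q$ one at a time, each contributing a factor $p-2$ via Corollary~\ref{qNCor}. Your justification that there are \emph{exactly} $\phi(Q)$ driving terms in $\pgap(Q)$ (using $Q\mid g$ to force start and end to coincide, hence $g/Q$ full revolutions) is a bit more explicit than the paper's, which simply exhibits the $\phi(Q)$ driving terms without arguing there are no others; and your closing caveat about constellations winding around a short cycle flags a subtlety the paper does not dwell on.
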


\begin{proof}
Let $n_1 = 2n / Q$.  By Lemma~\ref{LemmaCycle} the cycle of gaps $\pgap(2n)$ consists of $n_1$ concatenated copies
of $\pgap(Q)$.  In $\pgap(Q)$, there are $\phi(Q)$ driving terms for the gap $g=2n$.  To see this, start
at any gap in $\pgap(Q)$ and proceed through the cycle $n_1$ times.  The length of each of these
driving terms is initially $n_1 \cdot \phi(Q)$.

We now want to bring this back into Eratosthenes sieve.  

Let $Q_0=Q$, and let $p_1, \ldots, p_k$ be the prime factors of $\pml{\bar{q}} / Q$.
For $i=1,\ldots, k$, let $Q_i = p_i \cdot Q_{i-1}$, with $Q_k = \pml{\bar{q}}$.  
In forming $\pgap(Q_i)$ from $\pgap(Q_{i-1})$,
we apply Corollary~\ref{qNCor}.  Since $p_i \not| g$, we have
\begin{equation*}
 \sum_{j=1}^{J} n_{2n,j}(Q_i) =  (p_i - 2) \cdot \sum_{j=1}^{J} n_{2n,j}(Q_{i-1})
 \end{equation*}
Thus at $p_k$ we have
\begin{eqnarray*}
 \sum_{j=1}^{J} n_{2n,j}(\pml{\bar{q}}) &=&  \sum_{j=1}^{J} n_{2n,j}(Q_k) \gap = \gap
 (p_k - 2) \cdot \sum_{j=1}^{J} n_{2n,j}(Q_{k-1}) \\
  & = & \left( \prod_{i=1}^k (p_i-2) \right) \sum_{j=1}^J n_{2n,j}(Q_0) 
= \left( \prod_{i=1}^k (p_i-2) \right) \phi(Q) 
\end{eqnarray*}
\end{proof}

\begin{proof} {\bf of Theorem~\ref{PolThm}.}
Let $g=2n$ be given.  Let $Q$ be the product of the prime factors dividing $g$ and
let $\bar{q}$ be the largest prime factor of $g$.  By Lemma~\ref{QLem} we know that
in $\pgap(\pml{\bar{q}})$ there occur driving terms for $g$ if not the gap $g$ itself.
Lemma~\ref{QLem} gives the total number of these driving terms as
$$ \sum_j n_{g,j}(\pml{\bar{q}})  
 = \phi(Q) \cdot \prod_{p < \bar{q}, \; p\; \nmid \; Q} (p-2).$$
The number of gaps $2$ in $\pgap(\pml{q})$ is
 $n_{2,1}(\pml{q}) = \prod_{2<p\le q} (p-2).$
 So for the ratios we have
\begin{eqnarray*}
\sum_j \Rat{j}{g}(\pml{\bar{q}}) & = &
 \sum_j n_{g,j}(\pml{\bar{q}}) / n_{2,1}(\pml{\bar{q}}) \\
 &=& \phi(Q) / \prod_{p | Q, \; p > 2} (p-2)  = \prod_{p | Q, \; p > 2} \frac{(p-1)}{(p-2)}.
 \end{eqnarray*}
 By Corollary~\ref{qNCor} and Corollary~\ref{InfCor}, we have the result
 $$ \Rat{1}{2n}(\infty) = \prod_{p | 2n, \; p > 2} \left( \frac{p-1}{p-2}\right).$$
\end{proof}

This establishes a strong analogue of Polignac's conjecture for Eratosthenes sieve.
Not only do all even numbers appear as gaps in later stages of the sieve, but they
do so in proportions that converge to specific ratios.  

We use the gap $g=2$ as the reference point since it
has no driving terms other than the gap itself. 
The gaps for other even numbers appear in ratios to $g=2$
implicit in the work of Hardy and Littlewood \cite{HL}.
In their Conjecture B, they predict that the number of gaps $g=2n$ that
occur for primes less than $N$ is approximately
$$ 2 C_2 \frac{N}{(\log N)^2} \prod_{p \neq 2, \; p | 2n} \frac{p-1}{p-2}.$$
We cannot yet predict how many of the gaps in a stage of Eratosthenes sieve
will survive subsequent stages of the sieve to be confirmed as gaps among primes.
However, we note that for $g=2$, the product in the above formula is $1$, and the ratio
of gaps $g=2n$ to gaps $2$ is given by this product.  

We have shown in Theorem~\ref{PolThm} that this same product describes the
asymptotic ratio of occurrences of the gap $g=2n$ to the gap $2$ in $\pgap(\pml{p})$
as $p \fto \infty$.  So if the survival of gaps in the sieve to be confirmed as gaps among
primes is at all fair, then we would expect this ratio of gaps in the sieve to be preserved
among gaps between primes.

\subsection{Examples from $\pgap(\pml{31})$}
To work with Theorem~\ref{PolThm} we look at some data from $\pgap(\pml{31})$.
In Table~\ref{G31Table} we exhibit part of the table for $\pgap(\pml{31})$, that gives the counts
$n_{g,j}$ of driving terms of length $j$ (columns) for various gaps $g$ (rows).  
The last two columns give the current sum of driving terms for each gap and the 
asymptotic value from Theorem~\ref{PolThm}.

\begin{table}[b]
\renewcommand\arraystretch{0.8}
\begin{center}
\begin{tabular}{|r|rrrrrrr|rr|} \hline
\multicolumn{1}{|c}{gap} 
& \multicolumn{7}{c}{ $n_{g,j}(31)$: driving terms of length $j$ in $\pgap(\pml{31})$ }
 & & \\ [1 ex]
\multicolumn{1}{|c}{ } & $\lil 3$ & $\lil 4$ & $\lil 5$ & $\lil 6$ & $\lil 7$ & $\lil 8$ & $\lil 9$
 & $\lil \sum \Rat{j}{g}$ & $\lil \Rat{1}{g}(\infty)$ \\ \hline 
$\lil g = 74$ & $\lil 1$ & $\lil 1206$ & $\lil 70194$ & $\lil 1550662$
 & $\lil 17523160$ & $\lil 113497678$ & $\lil 445136490$ & $\lil 1$ & $\lil 1.02857$ \\
$\lil 76$ &  & $\lil 602$ & $\lil 32194$ & $\lil 765488$
  & $\lil 9470176$ & $\lil 68041280$ & $\lil 302507798$ & $\lil 1.0588$ & $\lil 1.0588$\\
$\lil 78$ & & $\lil 292$ & $\lil 26060$ & $\lil 826426$
   & $\lil 12166908$ & $\lil 99284264$ & $\lil 489040926$ & $\lil 2.1818$  & $\lil 2.1818$ \\
$\lil 80$ &  & $\lil 2$ & $\lil 2876$ & $\lil 139926$ & $\lil 2656274$
    & $\lil 26634332$ & $\lil 159280176$  & $\lil 1.3333$  & $\lil 1.3333$\\
$\lil 82$ &  &  & $\lil 747$ & $\lil 46878$ & $\lil 1066848$
     & $\lil 12378176$ & $\lil 83484438$ & $\lil 1$ & $\lil 1.0256$ \\
$\lil 84$ & & $\lil 2$ & $\lil 1012$ & $\lil 58216$ & $\lil 1485176$
      & $\lil 18772184$ & $\lil 135450260$ & $\lil 2.4$ & $\lil 2.4$ \\
$\lil 86$ & & & $\lil 74$ & $\lil 4726$ & $\lil 147779$
       & $\lil 2453256$ & $\lil 23265268$ & $\lil 1$ & $\lil 1.0244$ \\
$\lil 88$ &  & & $\lil 2$ & $\lil 2190$ & $\lil 107182$
        & $\lil 2025910$ & $\lil 20603366$ & $\lil 1.1111$ & $\lil 1.1111$ \\
$\lil 90$ & & $\lil 8$ & $\lil 300$ & $\lil 9360$ & $\lil 195708$
         & $\lil 2829548$ & $\lil 26983182$ & $\lil 2.6667$ & $\lil 2.6667$ \\
$\lil 92$ & & & $\lil 20$ & $\lil 860$ & $\lil 26854$
          & $\lil 488854$ & $\lil 5364068$ & $\lil 1.0476$ & $\lil 1.0476$ \\
$\lil 94$  & & & $\lil 16$ & $\lil 740$ & $\lil 19740$
           & $\lil 333162$ & $\lil 3684805$ & $\lil 1$ & $\lil 1.0222$ \\
$\lil 96$ & &  & $\lil 4$ & $\lil 242$ & $\lil 9636$
            & $\lil 249610$ & $\lil 3693782$ & $\lil 2$ & $\lil 2$ \\
$\lil 98$ &  &  & & $\lil 28$ & $\lil 1482$ & $\lil 52328$
             & $\lil 968210$ & $\lil 1.2$ & $\lil 1.2$ \\
$\lil 100$ & & & & $\lil 8$ & $\lil 672$
              & $\lil 26428$ & $\lil 567560$ & $\lil 1.3333$ & $\lil 1.3333$ \\
$\lil 102$ &  & &  & & $\lil 78$ & $\lil 7042$
               & $\lil 249300$ & $\lil 2.133$ & $\lil 2.133$ \\
$\lil 104$ &  &  &  &  & $\lil 182$ & $\lil 6086$
                & $\lil 129016$ & $\lil 1.0909$ & $\lil 1.0909$ \\
$\lil 106$ &  & & &  & $\lil 16$ & $\lil 1168$
                 & $\lil 37144$ & $\lil 1$ & $\lil 1.0196$ \\
$\lil 108$ &  &  &  &  & $\lil 8$ & $\lil 1244$
                  & $\lil 44334$ & $\lil 2$ & $\lil 2$ \\
$\lil 110$ & & & & &  & $\lil 142$
                   & $\lil 7686$ & $\lil 1.4815$ & $\lil 1.4815$ \\
$\lil 112$  &  &  &  & &  & $\lil 68$
                    & $\lil 5294$ & $\lil 1.2$ & $\lil 1.2$ \\
$\lil 114$ & & & & &  & $\lil 22$
                     & $\lil 2388$ & $\lil 2.1176$ & $\lil 2.1176$ \\
$\lil 116$ &  & &  & & & $\lil 224$
                      & $\lil 4716$ & $\lil 1.0370$ & $\lil 1.0370$ \\
$\lil 118$ &  &  &  &  &  &  & $\lil 72$
                      & $\lil 1$ & $\lil 1.0175$ \\
$\lil 120$ & &  &  &  &  &  & $\lil 1012$ 
                      & $\lil 2.6667$ & $\lil 2.6667$ \\
$\lil 122$ & &  & &  &  &  & $\lil 70$
                      & $\lil 1$ & $\lil 1.0169$ \\
$\lil 124$ & &  & &  &  &  & $\lil 28$
                      & $\lil 1.0345$ & $\lil 1.0345$ \\
$\lil 126$ & &  & &  &  &  & $\lil 4$
                      & $\lil 2.4$ & $\lil 2.4$ \\
$\lil 128$ & &  & &  &  &  & 
                      & $\lil 1$ & $\lil 1$ \\
$\lil 130$ & &  & &  &  &  & 
                      & $\lil 1.4545$ & $\lil 1.4545$ \\
$\lil 132$ & &  & &  &  &  & $\lil 2$
                      & $\lil 2.2222$ & $\lil 2.2222$ \\
\hline
 \end{tabular}
 \caption{ \label{G31Table} A sample of the population data for gaps $g$ and their driving terms
 in the cycle of gaps $\pgap(\pml{31})$.  This section of the table records the data where the driving
 terms of length $9$ are running out.  For the range of gaps displayed, there are no nonzero entries
 for $j=1, \; 2$.  The last two columns list for each gap the ratio of the sum of all the driving terms
 in $\pgap(\pml{31})$ to the population $g=2$, and the asymptotic ratio.}
 \end{center}
 \end{table}
\renewcommand\arraystretch{1}

In each stage of Eratosthenes sieve, some copies of the driving terms of length $j$ will have
at least one interior closure, resulting in shorter driving terms at the next stage.  For this part of the table, 
$g \ge 2p_{k+1}$ and so more than one closure could occur within a single copy of a driving
term.  

Regarding our work on Polignac's conjecture, from Table~\ref{G31Table} we observe that with
$p_0 =31$, if a gap $g=2n$ has a driving term of length $j$, then at each ensuing stage of the sieve a
shorter driving term will be produced.  Thus the gap itself will occur in
$\pgap(\pml{p_k})$ for $k \le \min j-1$, the length of the shortest driving term for $g$
in $\pgap(\pml{31})$.

We have chosen the
part of the table at which the
driving terms through length $9$ are running out.  In this part of the table we observe
interesting patterns for the maximum gap associated with driving terms of a given length.
The driving terms of length $4$ have sums up to $90$ but none of sums $82$, $86$, or $88$.
Interestingly, although the gap $128$ is a power of $2$, in $\pgap(\pml{31})$ its 
driving terms span the lengths from $11$ to $27$; yet the gaps $g=126$ and $g=132$
already have driving terms of length $9$.

From the tabled values for $\pgap(\pml{31})$, we see that the driving term of length $3$
for $g=74$ will advance into an actual gap in two more stages of the sieve.  Thus the maximum
gap in $\pgap(\pml{41})$ is at least $74$, and the maximum gap for
$\pgap(\pml{43})$ is at least $90$.

Note that in Table~\ref{G31Table}, some gaps
have not attained their asymptotic ratios:
$$\sum_j \Rat{j}{g}(\pml{31}) \neq \Rat{1}{g}(\infty) \gap {\rm for} \gap
g=74, 82, 86, 94,106, 118, 122.$$
Up through $\pgap(\pml{31})$
these ratios are $1$, but for each gap, we know that this ratio will jump
to equal $\Rat{1}{g}(\infty)$ in the respective $\pgap(\pml{\bar{q}})$.  
How does the ratio transition from $1$ to the
asymptotic value?  If we look further in the data for $\pgap(\pml{31})$, we find that for
the gap $g=222$, $\sum_j \Rat{j}{222}(\pml{31})=2$ but the asymptotic value is
$\Rat{1}{222}(\infty) = 72/35.$

These gaps $g=2n$ have maximum prime divisor $\bar{q}$ greater than the prime $p$ for
the current stage of the sieve $\pgap(\pml{p})$.  From Corollary~\ref{qNCor} and the
approach to proving Lemma~\ref{QLem}, we are able to establish the following.

\begin{corollary}
Let $g=2n$, and let $Q=q_1 q_2 \cdots q_k$ be the product of the distinct prime factors of $g$,
with $q_1 < q_2 < \cdots < q_k$.  Then for $\pgap(\pml{p})$,
$$
\sum_j \Rat{j}{g}(\pml{p}) = \prod_{2 < q_i \le p} \left(\frac{q_i-1}{q_i-2}\right).
$$
\end{corollary}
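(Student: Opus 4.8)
The plan is to mimic the proof of Theorem~\ref{PolThm}, except that instead of building $\pgap(\pml{p})$ all the way up from the full radical of $g$, I stop as soon as the prime $p$ has been introduced. Write $q_1=2<q_2<\cdots<q_k$ for the distinct prime factors of $g=2n$, and let $q_1,\dots,q_r$ be those that satisfy $q_i\le p$ (so $p<q_{r+1}<\cdots<q_k$). Set $Q_0=q_1q_2\cdots q_r$, the product of the prime factors of $g$ that do not exceed $p$. Then $Q_0\mid g$, $Q_0\mid\pml{p}$, and $\pml{p}/Q_0=\prod_{q\le p,\ q\nmid g}q$. When $r=k$ (that is, $p\ge q_k$) this reduces to Theorem~\ref{PolThm} and Corollary~\ref{InfCor}, so the content is the case $p<q_k$.

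First I would determine $\sum_j n_{g,j}(Q_0)$ and $N_2(Q_0)$. Put $m=g/Q_0\in\Z$. Any constellation of consecutive gaps in $\pgap(Q_0)$ runs from one generator of $\Z\bmod Q_0$ to another, and its sum is therefore congruent modulo $Q_0$ to the difference of two residues coprime to $Q_0$; so a constellation whose sum is $g=mQ_0\equiv 0\pmod{Q_0}$ must begin and end at the same generator, hence must be the entire cycle $\pgap(Q_0)$ traversed exactly $m$ times. There is one such instance for each starting generator, so $\sum_j n_{g,j}(Q_0)=\phi(Q_0)=\prod_{i=1}^{r}(q_i-1)$; this is the analogue for the partial radical $Q_0$ of the count used for $\pgap(Q)$ in Lemma~\ref{QLem}. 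For $N_2(Q_0)$ one counts $\gamma\in[1,Q_0]$ with $\gamma$ and $\gamma+2$ both coprime to $Q_0$ (note $2\mid Q_0$ forces $\gamma+1$ to be a nongenerator, so these are exactly the gaps $2$); by the Chinese Remainder Theorem this is $1\cdot\prod_{i=2}^{r}(q_i-2)$, the same inclusion--exclusion as in the Twin Generators count. Hence
\[
\sum_j\Rat{j}{g}(Q_0)=\frac{\phi(Q_0)}{N_2(Q_0)}=\prod_{i=2}^{r}\frac{q_i-1}{q_i-2}=\prod_{2<q_i\le p}\frac{q_i-1}{q_i-2}.
\]

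Next I would pass from $\pgap(Q_0)$ up to $\pgap(\pml{p})$ by introducing the remaining primes $q\le p$ (those with $q\nmid g$) one at a time, each step being an instance of Lemma~\ref{LemmaCycle}. Because $q\nmid g$ at every such step, Corollary~\ref{qNCor} applies and leaves $\sum_j\Rat{j}{g}$ unchanged; after all of these primes have been inserted the modulus is $\pml{p}$, so $\sum_j\Rat{j}{g}(\pml{p})=\sum_j\Rat{j}{g}(Q_0)$, which is the claimed product. The degenerate case $r=1$ (only $q_1=2$ divides $g$ among primes $\le p$) is covered automatically: then $Q_0=2$, $\pgap(2)$ is the one-gap cycle, the single driving term is the gap $2$ repeated $n$ times, $N_2(2)=1$, and both sides equal the empty product $1$.

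The only step I expect to require care is the determination of the driving terms in $\pgap(Q_0)$: one has to handle the cyclic bookkeeping (which entry is the final gap, and how running sums behave on wrap-around) and the notion of an instance of a constellation long enough to wrap, so as to be sure the count $\phi(Q_0)$ is exact and complete. Everything after that is a direct appeal to Lemma~\ref{LemmaCycle}, Corollary~\ref{qNCor}, and the formula for $N_2$ already established.
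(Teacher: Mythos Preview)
Your proposal is correct and follows essentially the same route as the paper: you form the partial radical $Q_0=\prod_{q_i\le p}q_i$, count the driving terms for $g$ in $\pgap(Q_0)$ as $\phi(Q_0)$ (each generator starting a full $m$-fold traversal of the cycle), and then invoke Corollary~\ref{qNCor} to fill in the remaining primes $q\le p$ with $q\nmid g$ without changing $\sum_j\Rat{j}{g}$. The paper does the same, phrasing it as computing the ratio at $p=q_j$ and observing via Corollary~\ref{qNCor} that the sum is constant between successive $q_i$; your version is slightly more explicit in justifying the count of driving terms and in computing $N_2(Q_0)$ directly.
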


\begin{proof}
Let $p=q_j$ for one of the prime factors in $Q$.  By Corollary~\ref{qNCor} these are the only
values of $p$ at which the sum of the ratios $\sum_j \Rat{j}{g}(p)$ changes.

Let $Q_j = q_1 q_2 \cdot q_j$.
In $\pgap(\pml{q_j})$, $g$ behaves like a multiple of $Q_j$. 
As in the proof of Lemma~\ref{QLem}, in $\pgap(Q_j)$ each generator begins a driving term
of sum $2n$, consisting of $2n / Q_j$ complete cycles.  There are $\phi(Q_j)$ such driving
terms.  

We complete $\pgap(\pml{q_j})$ as before by introducing the missing prime factors.  
The other prime factors do not
divide $2n$, and so by Corollary~\ref{qNCor} the sum of the ratios is unchanged by these factors.
We have our result:
$$ \sum_j \Rat{j}{g}(\pml{q_j}) = \prod_{2 < q_i \le q_j} \left(\frac{q_i-1}{q_i-2}\right).
$$
\end{proof}

For the gap itself, we know from Equation~\ref{EqEigSys} that the ratio $\Rat{1}{g}(\pml{p})$ 
converges to its asymptotic value
as quickly as $a_2^k \fto 0$.  We have observed above that this convergence is slow.

\section{Gaps between prime numbers and gaps in the sieve}
In our work above, we obtain several exact and asymptotic results regarding the cycles of gaps
$\pgap(\pml{p})$ that occur in Eratosthenes sieve.  What is the relationship between
the cycle of gaps  and the gaps between prime numbers?

Let's look at $\pgap(\pml{7})$ as an example. This cycle of gaps has length $48$, and the
gaps sum to $210$.
$$
\pgap(\pml{7}) =
 {\scriptstyle 
 10, 2424626424 6 62642 6 46 8 42424 8 64 6 24626 6 4246264242, 10, 2}
 $$
The first gap $10$ marks the next prime, $p_{k+1}=11$.  This first gap is the accumulation of gaps
between the primes from $1$ to $p_{k+1}$.  The next several gaps will actually survive to be confirmed
as gaps between primes, since the smallest remaining closure will occur at $p_{k+1}^2 = 121$.  

In $\pgap(\pml{p_k})$ all of the gaps from $p_{k+1}$ until the gap before $p_{k+1}^2$ 
are actually gaps between primes. Then, after closing
at $p_{k+1}^2$, the next set of gaps survive up until the closure at $p_{k+1}\cdot p_{k+2}$.
Let us look at the closures
that occur in $\pgap(\pml{7})$ as the sieve continues, marking the gaps that survive in bold.
$$\begin{array}{rl}
\pgap(\pml{7}) \;  = &
 {\scriptstyle 
 10, 2424626424 6 62642 6 46 8 42424 8 64 6 24626 6 4246264242, 10, 2} \\
(p=11)  \Rightarrow & {\scriptstyle 
 10, +
 \overbrace{\scriptstyle {\bf 2424626424 6 62642 6 46 8 42424} \; 8}^{110} + 
 \overbrace{\scriptstyle 6 \; {\bf 4 6 2} \; 4}^{22} +
 \overbrace{\scriptstyle 6 \; 26 6 42462 \; 6}^{44} +
 \overbrace{\scriptstyle 4 \; 242, \; 10,}^{22} + 2  \ldots} \\
(p= 13)  \Rightarrow &  {\scriptstyle 12, +
 \overbrace{\scriptstyle {\bf 424626424 6 62642 6 46 8 42424, \; 14, \;  462, \; 10, \; 26 6 4} \; 2}^{156} + 
 4 {\bf 62, \; 10,  \; 242, \; 12,} \ldots}  \\
 (p = 17)  \Rightarrow &  {\scriptstyle 
 16, \; {\bf 24626424 6 62642 6 46 8 42424, \; 14, \;  462, \; 10, \; 26 6 4 \; 6\; 62, \; 10,  \; 242, \; 12,} \ldots }  
 \end{array}
 $$
From the prime $p=17$ and up, there are no more closures for this sequence of gaps.  All of the
remaining gaps survive as gaps between primes.

All of the gaps between primes are generated out of these cycles of gaps, with the gaps at the front of 
the cycle surviving subsequent closures.  

We have some evidence that the recursion is a fair process.  There is an approximate uniformity
to the replication.  Each instance of a gap in $\pgap(\pml{p_k})$ is replicated $p_{k+1}$ times uniformly
spaced in step R2, and then two of these copies are removed through closures.  Also,
the parameters for the dynamic system are independent of the size of the gap; each constellation 
of length $j$ is treated the same, with the threshold condition $g < 2p_{k+1}$.  If the recursion
is a fair process, then do we expect the survival of gaps to be fair as well?

If we had a better characterization of the survival of the gaps in $\pgap(\pml{p})$, or of the 
distribution of subsequent closures across this cycle of gaps, we would be able to make 
stronger statements about what these exact results on the gaps in Eratosthenes sieve imply
about the gaps between primes.  

\section{Conclusion}
By identifying structure among the gaps in each stage of Eratosthenes sieve, we 
have been able to develop an exact model for the populations of gaps and their
driving terms across stages of the sieve.  
We have developed a model for a discrete dynamic system that takes 
the initial populations of a gap $g$ and all its driving terms in a cycle of gaps 
$\pgap(\pml{p_0})$ such that $g < 2p_1$, and thereafter the model provides the exact
populations of this gap and its driving terms through all subsequent cycles of gaps.

The coefficients of this model
do not depend on the specific gap, only on the prime for each stage of the sieve.
To this extent, the the sieve is agnostic to the size of the gaps.

On the other hand, the initial conditions for the model do depend on the size
of the gap.  More precisely, the initial conditions depend on the prime factorization of
the gap.

For several conjectures about the gaps between primes, we can offer 
precise results for their analogues in the cycles of gaps across stages of Eratosthenes
sieve.  Foremost among these analogues, perhaps, is that we are able to 
affirm in Theorem~\ref{PolThm} an analogue of Polignac's conjecture 
that also supports Hardy \& Littlewood's
Conjecture B:

{\em For any even number $2n$, the gap $g=2n$ arises in Eratosthenes sieve, and
as $p \fto \infty$, the number of occurrences of the gap $g=2n$ to the gap $2$
approaches the ratio
$$w_{2n,1}(\infty) =  \prod_{q>2, \; q|n} \frac{q-1}{q-2}.
$$}
These results
provide evidence toward the original conjectures, to the extent that gaps
in stages of Eratosthenes sieve are indicative of gaps among primes themselves.

To obtain the analogue of Polignac's conjecture, we had to generalize our approach,
looking at the cycles of gaps $\pgap(N)$ for any $N$ and leveraging the simplicity of the
dominant right and left eigenvectors for the dynamic system, 
corresponding to eigenvalue $1$.

It is daunting to consider the span of these cycles of gaps $\pgap(\pml{p})$.
This cycle of gaps $\pgap(\pml{p})$ has $\phi(\pml{p})$ gaps that sum up to
$\pml{p}$.  For example, we have calculated initial conditions for gaps in 
$\pgap(\pml{31})$, which consists of about $3 \times 10^{10}$ gaps whose sum
is around $2 \times 10^{11}$.  

The cycle $\pgap(\pml{31})$ completely determines the sequence of gaps between
the primes from $37$ up to $37^2=1369$, and it sets the number and location of all
the driving terms up through $2 \times 10^{11}$.
This is all determined by the time we have run Eratosthenes sieve only through $p=31$.

For this paper, our analysis of the dynamic system has focused on the populations of 
the gaps.  We note that the dynamic system can be applied to constellations as well,
providing analogues to complement works on constellations of primes 
\cite{HL, quads, GPY, GranPatterns}.
Once a constellation $s$ of length $j$ and sum $g$ arises, if $j < p-1$, then this constellation
persists through all later cycles of gaps and its population grows.  This raises the prospect,
for example, of  finding twin primes infinitely often in the constellations $242$, and $2,10,2$, and even
$2,10,2,10,2$.  Corollary~\ref{CorGrowth} describes the growth of all sufficiently small constellations
within the sieve.


\bibliographystyle{amsplain}

\providecommand{\bysame}{\leavevmode\hbox to3em{\hrulefill}\thinspace}
\providecommand{\MR}{\relax\ifhmode\unskip\space\fi MR }
\providecommand{\MRhref}[2]{%
  \href{http://www.ams.org/mathscinet-getitem?mr=#1}{#2}
}
\providecommand{\href}[2]{#2}

\end{document}